\definecolor{mygray}{gray}{0.3} 
\newtheorem{theorem}{Theorem}
\newtheorem{proposition}[theorem]{Proposition}
\newtheorem{lemma}[theorem]{Lemma}
\newtheorem{assumption}{Assumption}
\newcommand{\eps}{\epsilon}
\newcommand{\la}{\lambda}
\newcommand{\om}{\omega}
\renewcommand{\phi}{\varphi}
\newcommand{\N}{\mathds{N}}
\newcommand{\R}{\mathds{R}}
\newcommand{\C}{\mathds{C}}
\newcommand{\D}{\mathds{D}}
\newcommand{\PP}{\mathbb{P}}
\newcommand{\dd}{\mathrm{d}}
\newcommand{\E}{\mathbb{E}}
\newcommand{\var}{\mathrm{Var}}
\newcommand{\CUE}{\mathrm{CUE}}
\newcommand{\OO}{\mathrm{O}}
\newcommand{\Sp}{\mathrm{Sp}}
\newcommand{\tr}{\mathrm{tr}}
\newcommand{\re}{\mathrm{e}}
\newcommand{\ri}{\mathrm{i}}
\newcommand{\ovr}{\overline}
\newcommand{\diag}{\mathrm{diag}}
\newcommand{\W}{\mathscr{W}}
\newcommand{\Wg}{\mathrm{Wg}}
\newcommand{\disteq}{\stackrel{d}{=}}
\title[Dynamics of a rank-one multiplicative
perturbation of a unitary matrix]{Dynamics of a rank-one multiplicative\\
perturbation of a unitary matrix}
\author{Guillaume Dubach}
\address{(GD) DMA, École Normale Supérieure -- PSL, 45 rue d'Ulm, F-75230 Cedex 5 Paris, France}
\email{guillaume.dubach@ens.fr}
\author{Jana Reker}
\address{(JR) Institute of Science and Technology Austria, 3400 Klosterneuburg, Austria}
\email{jana.reker@ista.ac.at}
\begin{document}

\begin{abstract}
We provide a dynamical study of a model of multiplicative perturbation of a unitary matrix introduced by Fyodorov. In particular, we identify a flow of deterministic domains that bound the spectrum with high probability, separating the outlier from the typical eigenvalues at all sub-critical timescales. These results are obtained under generic assumptions on $U$ that hold for a variety of unitary random matrix models.
\end{abstract}

\keywords{Rank-one Perturbation; Eigenvalue Dynamics; Non-Hermitian Random Matrices}
\subjclass{Primary: 60B20, 15B52; Secondary: 47B93}

\maketitle
\vspace{-.5cm}
\begin{figure}[h]
\begin{center}
\includegraphics[width=.3\textwidth]{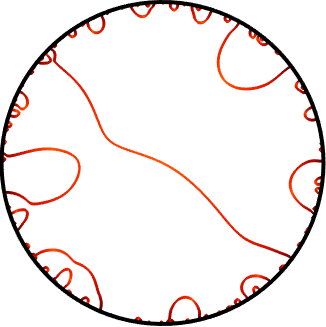} 
\quad
\includegraphics[width=.3\textwidth]{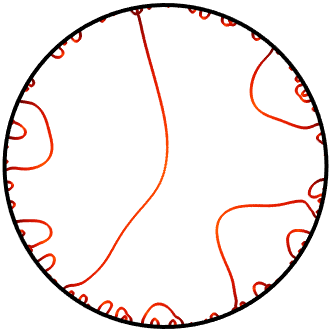} 
\quad
\includegraphics[width=.3\textwidth]{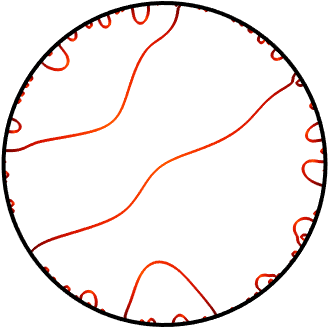}
\end{center}
\captionof{figure}{Trajectories of the eigenvalues of the $UA$ model of size $100\times100$. The evolution of the time parameter $t \in (-1,1)$ is represented by shades of red.}\label{fig1}
\end{figure}

\section*{Introduction}

The recent survey \cite{Forrester_Review} offers an overview of a variety of exact results concerning rank-one perturbations of random matrices. Among those are 
\begin{enumerate}[label=(\roman*)]
    \item Hermitian perturbations of Hermitian operators, where the now well-understood phenomenon of BBP transition occurs \cite{BBP};
    \item Generic perturbations of non-Hermitian operators, exemplified by the milestone work of Tao \cite{Tao2013};
\end{enumerate}
    as well as another, intermediate, class of models, namely,
\begin{enumerate}[label=(\roman*)]
    \item[(iii)] Perturbations of a normal (Hermitian or Unitary) operator resulting in a loss of normality.
\end{enumerate}
Interest in these particular models has its origin in physics, as they are relevant to scattering theory (see for instance the general introduction \cite{FyodorovSavin}), and as such, they have been the subject of several studies in recent years, both from the mathematical and the physics perspective. The model for which the most precise estimates are available to date is the rank-one anti-Hermitian perturbation of a Hermitian matrix, namely
\begin{equation}\label{GUE_model}
M(t) := H + \ri t vv^*, \qquad t \geq 0,
\end{equation}
where $H$ is a matrix with Gaussian entries independent up to the condition of Hermitianity (the so-called \textit{Gaussian Unitary Ensemble}, or GUE) and $v$ a unit vector, sampled uniformly on the sphere, and independent from $H$. This model was first studied in the pioneering works \cites{FyodorovSommers1996,FyodorovSommers1997,FyodorovKhoruzhenko}.
Recently, the more general case of a Wigner matrix was considered in \cite{DubachErdos}. It was shown that a unique outlier becomes separated for all times $\smash{t> 1+N^{-\frac13+ \epsilon}}$ for any $\epsilon>0$. Shortly thereafter, it was confirmed in \cite{FyodorovGUE}, using exact expressions available in the integrable GUE case, that $\smash{1+ \OO( N^{-\frac13})}$ is indeed the critical timescale for the emergence of the outlier. \\

Another relevant setting is the so-called $UA$ model, or rank-one multiplicative perturbation of a unitary matrix, first introduced by Fyodorov \cite{Fyodorov2001} (see also the review \cite{FyodorovSommers2003}). It is defined by
\begin{equation}\label{UA_model}
W(a) := U A, \qquad 
A = \diag(a, 1, \dots, 1), \quad
a \in [0,1],
\end{equation}
with the matrix $U$ being Haar-distributed on the unitary group (the so-called \emph{Circular Unitary Ensemble}, or CUE).
This model was studied in \cite{ForresterIpsen} at the timescale $a= \OO( N^{-\frac12})$, which happens to be the critical timescale for the emergence of an outlier at the origin.
The purpose of this paper is to conduct a dynamical study (replacing $a\in [0,1]$ by $t\in \R$, considered as a time parameter) of such $UA$ models under fairly generic assumptions on the matrix $U$.

\subsection*{Definitions and notations.}
Throughout the paper, $\tr(\cdot)$ denotes the (non-normalized) trace and $\Sp(\cdot)$ denotes the spectrum of a matrix. We denote the open unit disc and an open disk of center $z_0$ and radius $r$ by $\D$ and $D(z_0,r)$, respectively. The boundary of a set $S$ is denoted by $\partial S$ and its complement by $S^c$. \\

We say that an event (more precisely, a sequence of events) $\Omega=\Omega^{(N)}$ occurs \emph{with high probability (w.h.p.)} if 
\begin{equation}\label{whp}
\PP \left( \Omega^{(N)} \right) \xrightarrow[N \rightarrow \infty]{} 1,
\end{equation}
and that it occurs \emph{with overwhelming probability (w.o.p.)} if for any $A >0$,
\begin{equation}\label{wop}
    1-\PP \left( \Omega^{(N)} \right) < N^{-A}
\end{equation}
for large enough $N$. \\

An important tool that we need is the notion of \emph{uniform stochastic domination}, which is defined as follows. Let
\begin{displaymath}
X=\left\{X^{(N)}(z) \ : \ N\in\N, z\in \D \right\} 
\quad \text{and} \quad
Y=\left\{Y^{(N)}(z) \ : \ N\in\N, z \in \D\right\}
\end{displaymath}
be two families of complex random variables that are indexed by $N \in \N$ and $z \in \D$. We say that $X$ is \emph{stochastically dominated} by $Y$, uniformly for $z \in \mathscr{S}^{(N)} \subset \D$ if, for all $\eps>0$ we have that
$$
\forall z \in  \mathscr{S}^{(N)} \qquad |X^{(N)}(z)| \leq N^\epsilon |Y^{(N)}(z)|
$$
with overwhelming probability.
We denote this by $X\prec Y$, specifying over which domain $\mathscr{S}^{(N)}$ the uniformity holds. \\

We say that $G_N$ has a \emph{strongly separated outlier towards the origin for $t \in T \subset [-1,1]$} if there exist $\alpha_1 < \alpha_2 $, $c_1, c_2 \in (0,1)$ such that with high probability, for every $t \in T$,
\begin{enumerate}[label=(\roman*)]
    \item $D \left(0, c_1 \frac{N^{\alpha_1}}{1+N^{\alpha_1}}\right)$ contains exactly one eigenvalue of $G_N(t)$;
    \item $\D \backslash D \left(0, c_2 \frac{N^{\alpha_2}}{1+N^{\alpha_2}}\right)$ contains $N-1$ eigenvalues of $G_N(t)$.
\end{enumerate}
Whenever $\alpha_1<0$, we say that $G_N$ has a strongly separated outlier \emph{at the origin}.

\subsection*{Definition of the Model and statement of results}
Let $U\in \mathbb{U}_N(\C)$ be a unitary matrix and define
\begin{equation}\label{eq-defA}
A(t) :=I_N - (1-t) vv^*\in {M}_N(\C)
\end{equation}
with $t\in\R$ and a unit vector $v\in\C^N$. We consider the multiplicative rank one perturbation of $U$ that is given by 
\begin{equation}\label{def_UA}
G = G(t) = U A(t).
\end{equation} 
Note that this model coincides with \eqref{UA_model}
if $v=e_1$ and $|t|<1$, taking $a=t$. It is readily checked that $G(t)$ is unitary for $t=\pm 1$, so that its spectrum is in $\partial \D$. The eigenvalue trajectories of $G(t)$ for $|t|\leq1$ lie in $\overline{\D}$ by Weyl's inequality (as the smallest singular value of $G(t)$ is equal to $|t|$ while all others have modulus 1 by direct computation).
For $t=0$, the spectrum of $G(0)$ coincides with that of a truncated unitary matrix, with an extra eigenvalue at $0$. Observing that, for $t \neq 0$, $G$ is invertible with $G(t)^{-1}=G(t^{-1})^{*}$, the trajectories for $|t| \geq 1$ lie in the complement of $\D$, and their properties are readily obtained from the $|t| \leq 1$ case. We will hence focus our study of the spectrum of $G(t)$ to $t \in (-1,1)$, as represented on Figure \ref{fig1}. 

\medskip
For $U$ a unitary matrix and $v$ a unit vector, we define the quantity
\begin{equation}\label{def_om1}
\om_1^{(N)} := \sqrt{N} v^* U^* v
\end{equation}
and the functions
\begin{equation}\label{def_Ws}
\W(z) := v^* \frac{1}{I_N - z U^*} v,
\qquad
\W_2 (z) := v^* \frac{(zU^*)^2}{I_N - z U^*} v, 
\qquad
s_1(z) : = 1 + \frac{\om_1^{(N)}}{\sqrt{N}} z.
\end{equation}
Note that $\W$ is named by analogy with the `weighted resolvent' in \cite[Def.~2]{DubachErdos}, as it plays a similar role in the analysis below. However, there are important differences in the behavior and scaling of the two objects, which we address in more detail in the appendix. It is immediate to check that
\begin{equation}
\W(z) = s_1(z) + \W_2(z).
\end{equation}

Our main results are stated using the following assumptions.

\begin{assumption}[Simplicity]\label{bad_ass}
The spectrum of the random unitary matrix $U$ is almost surely simple; the unit vector $v$ is independent of $U$ and distributed with a non-vanishing density on the sphere $\mathbb{S}_{N-1}(\C)$.
\end{assumption}

Note that the simplicity of the spectrum holds for all typical models of random matrices. Whenever Assumption~\ref{bad_ass} is used, we denote the randomness in $v$ by $\PP_v$. In Section \ref{sec_Rouché}, we will work under the following regularity assumptions on $U$ and $v$.

\begin{assumption}[Regularity for $\W_2$]\label{smart_ass}
For any $\delta>0$,
\begin{equation}\label{assum_2.2_intro}
    \W_2(z) \prec \frac{|z|^2}{\sqrt{N} (1-|z|)}
\end{equation}
uniformly for $z \in \D_{\delta} := D(0,1-N^{-\delta})$.
\end{assumption}

\begin{assumption}[Regularity for $\om_1$]\label{kick_ass}
For any $\epsilon>0$,
\begin{equation}\label{assum_2.1_intro}
    N^{\eps} > |\om_1^{(N)}| > N^{-\epsilon}
\end{equation}
holds with high probability.
\end{assumption}

This is verified in particular for a Haar-distributed unitary matrix and a uniform unit vector $v$ independent from $U$, as will be proved in Theorem \ref{thm:assumption_holds}. Note that one of our results (Theorem \ref{main_theorem_precise_om1}) does not rely on Assumption \ref{kick_ass}, but only on the deterministic bound
\begin{equation}\label{om1_deterministic_bound}
|\om_1^{(N)}| \leq N^{\frac12},
\end{equation}
which follows from the definition of $\om_1^{(N)}$, and $U$ being unitary. \\

Our main result is a description of the different scales of these dynamics, under the regularity assumption. In particular, the emergence of an outlier at the origin is connected to the critical timescale $t = \OO(N^{-\frac12})$, in a way that can be described as follows:

\begin{theorem}\label{main_theorem_intro}
Under Assumptions \ref{smart_ass} and \ref{kick_ass}, for any $\alpha> \eps >0$, the following holds:
\begin{itemize}
\item There is a strongly separated outlier at the origin for $|t| < N^{-\frac12-\alpha}$.
\item With high probability, for all $|t| > N^{-\frac12 +\alpha}$, all eigenvalues are in $\D \backslash D(0,1-N^{-\alpha + \eps})$.
\end{itemize}
\end{theorem}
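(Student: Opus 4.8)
The plan is to locate the eigenvalues of $G(t) = UA(t)$ by recasting the eigenvalue equation $\det(G(t) - z) = 0$ as a scalar equation involving $\W(z)$, and then apply Rouché's theorem on the relevant deterministic contours. Since $A(t) = I_N - (1-t)vv^*$ is a rank-one update of the identity, the determinant $\det(UA(t) - z) = \det(U)\det(A(t) - zU^*) = \det(U)\det(I_N - zU^*)\det\!\bigl(I_N - (1-t)vv^*(I_N - zU^*)^{-1}\bigr)$ factors, so by the matrix determinant lemma the eigenvalues of $G(t)$ in $\D$ (away from $\Sp(U^{-1})$, which lies on $\partial\D$) are exactly the solutions of
\begin{equation}\label{eq:char}
1 - (1-t)\,v^*(I_N - zU^*)^{-1}v = 1 - (1-t)\W(z) = 0,
\end{equation}
counted with multiplicity. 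Using the decomposition $\W(z) = s_1(z) + \W_2(z)$ with $s_1(z) = 1 + \tfrac{\om_1^{(N)}}{\sqrt N}z$, equation \eqref{eq:char} becomes $1-(1-t)(1 + \tfrac{\om_1^{(N)}}{\sqrt N}z + \W_2(z)) = 0$, i.e.
\begin{equation}\label{eq:char2}
t - (1-t)\Bigl(\tfrac{\om_1^{(N)}}{\sqrt N}z + \W_2(z)\Bigr) = 0.
\end{equation}

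For the first bullet (outlier at the origin when $|t| < N^{-1/2-\alpha}$): the idea is that for small $t$ the solution near the origin is governed by the linear term, $z \approx -\tfrac{t\sqrt N}{(1-t)\om_1^{(N)}}$, which has modulus of order $|t|\sqrt N/|\om_1^{(N)}|$. By Assumption \ref{kick_ass}, $|\om_1^{(N)}| \in (N^{-\eps}, N^{\eps})$ w.h.p., so this is of order $|t|N^{1/2 \pm \eps} \le N^{-\alpha + \eps}$. I would apply Rouché on a small circle $\partial D(0, r_{\mathrm{in}})$ with, say, $r_{\mathrm{in}} = N^{-\alpha/2}$: on this circle the linear term $(1-t)\tfrac{\om_1^{(N)}}{\sqrt N}z$ dominates both $t$ and $(1-t)\W_2(z)$ — the latter because Assumption \ref{smart_ass} gives $|\W_2(z)| \prec |z|^2/(\sqrt N(1-|z|)) \ll |z|/\sqrt N$ for $|z|$ small — so \eqref{eq:char2} has exactly one root inside, matching item (i) of the definition with some $\alpha_1 < 0$. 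For item (ii), one shows there are no roots in an annulus $r_{\mathrm{in}}' < |z| < 1 - N^{-\delta}$ for a suitable slightly larger $r_{\mathrm{in}}'$: here one checks that $|t|$ is too small to balance $(1-t)|\tfrac{\om_1^{(N)}}{\sqrt N}z + \W_2(z)|$, which is $\gtrsim |z|N^{-1/2-\eps}$ on that range once $|z| \gg |t|N^{1/2+\eps}$; combined with the unitary boundary behaviour at $t = \pm 1$ (or a direct counting at an intermediate radius) this forces the remaining $N-1$ eigenvalues outside $D(0, c_2\tfrac{N^{\alpha_2}}{1+N^{\alpha_2}})$.

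For the second bullet ($|t| > N^{-1/2+\alpha}$ forces all eigenvalues into $\D \setminus D(0, 1 - N^{-\alpha+\eps})$): here I would show \eqref{eq:char2} has no solution with $|z| \le 1 - N^{-\alpha+\eps}$. On the disk $\D_\delta = D(0, 1 - N^{-\delta})$ with $\delta = \alpha - \eps$, Assumption \ref{smart_ass} bounds $|\W_2(z)| \prec |z|^2/(\sqrt N(1-|z|)) \le N^{\delta - 1/2 + \eps'}$, while the linear term is at most $|\om_1^{(N)}|/\sqrt N \le N^{-1/2+\eps}$; thus $(1-t)|\tfrac{\om_1^{(N)}}{\sqrt N}z + \W_2(z)| \le N^{\alpha - 1/2 + \eps'}$ with overwhelming probability, which is strictly smaller than $|t| \ge N^{-1/2+\alpha}$ once the $\eps'$'s are chosen small relative to the gap. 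Hence the left side of \eqref{eq:char2} is nonzero throughout $\D_\delta$, so $G(t)$ has no eigenvalue there, i.e. all eigenvalues lie in the thin annulus $1 - N^{-\alpha+\eps} \le |z| < 1$ (using again that the eigenvalues stay in $\overline\D$ by Weyl). One needs to take a union bound over $t$ in a fine net of the interval and use continuity/Lipschitz control of the eigenvalues in $t$ to upgrade from pointwise-in-$t$ to uniform-in-$t$ statements; the w.o.p. strength of stochastic domination makes this routine.

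The main obstacle I anticipate is the careful bookkeeping of exponents near the threshold $|t| \sim N^{-1/2}$, i.e. making sure the various $\eps$'s coming from the stochastic-domination bounds in Assumptions \ref{smart_ass}--\ref{kick_ass} can be absorbed into the margins $\alpha$ and still leave the correct count of roots on both sides of the Rouché comparison; in particular, choosing the intermediate radii $r_{\mathrm{in}}, r_{\mathrm{in}}'$ so that the "one root inside, $N-1$ outside" dichotomy is clean requires that the transition region $|z| \sim |t|\sqrt N/|\om_1^{(N)}|$ be genuinely separated from both $0$ and the bulk edge, which is exactly what the hypotheses $\alpha > \eps$ guarantee. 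A secondary technical point is justifying that \eqref{eq:char} captures all eigenvalues in $\D$ with correct multiplicities (the simplicity Assumption \ref{bad_ass} and the invertibility of $I_N - zU^*$ for $z \in \D$ handle this).
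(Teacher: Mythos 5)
Your route is essentially the paper's: characterize the spectrum as the level set $\W(z)=1/(1-t)$ (Lemma~\ref{lem-trajectories}), split $\W=s_1+\W_2$, and run Rouché's theorem on deterministic contours, using Assumption~\ref{smart_ass} to bound $\W_2$ and Assumption~\ref{kick_ass} to control $\om_1^{(N)}$. The paper abstracts this into a general Rouché domain $\mathscr{R}_{t,\eta}$ (Proposition~\ref{prop_Rouche_domain_R}) and then verifies that specific circles $\partial D_1,\partial D_2,\partial D_3$ lie inside it; in particular $D_1$ is centered at $z_t$ rather than the origin, which is how the sharper location of the outlier in Theorem~\ref{main_theorem_precise_om1} is obtained. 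Your version, with origin-centered circles, gives the weaker statement of Theorem~\ref{main_theorem_intro}, which is what was asked.

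One step would fail as written. In the subcritical case you claim that $|\tfrac{\om_1}{\sqrt N}z+\W_2(z)|\gtrsim|z|N^{-1/2-\eps}$ throughout the annulus $|t|N^{1/2+\eps}\ll|z|<1-N^{-\delta}$. The linear term has size $|\om_1||z|/\sqrt N$, but Assumption~\ref{smart_ass} only gives an \emph{upper} bound $|\W_2(z)|\le N^{\eps'}|z|^2/(\sqrt N(1-|z|))$; since $\W_2$ carries arbitrary phase, you cannot exclude near-cancellation of the sum once $|z|/(1-|z|)\gtrsim|\om_1|N^{-\eps'}$, which happens well before $|z|$ reaches order one. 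Consequently the eigenvalue-free annulus produced by this comparison stops at a radius of order $N^{-c}$ for some small $c>0$ depending on the $\eps$'s. This suffices — the definition of a strongly separated outlier allows $\alpha_2<0$, and indeed the paper's $D_2$ in Theorem~\ref{main_theorem_precise_om1} has radius $\rho_2\asymp N^{-\eps}$ — so the fix is merely to restrict the outer radius; the appeal to ``unitary boundary behaviour at $t=\pm1$'' does not rescue the larger annulus and is not needed. A secondary remark: you do not need a net in $t$. The difference $f_t-g_t=\W_2$ is $t$-independent, so once the stochastic-domination bound on $\W_2$ holds uniformly in $z$ (w.o.p.) and the bound on $\om_1^{(N)}$ is fixed, the Rouché inequality on your contours is a deterministic statement for every $t$ simultaneously; this is precisely how Proposition~\ref{prop_Rouche_domain_R} gets uniformity over $t$ for free.
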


More precise statements are given as Theorems \ref{main_theorem_precise_om1} and \ref{main_theorem_precise_no_om1} in Section \ref{sec_Rouché}.
\smallskip

\subsection*{An overview of the method: Rouché's theorem and isotropic local law.}
The method used in \cite{DubachErdos} to study the model \eqref{GUE_model} essentially relies on two key facts:
\begin{enumerate}[label=(\roman*)]
    \item \label{kf1} The characterization of the spectrum at time $t$ as the preimage of $\ri/t$ by a random meromorphic function $\W$;
    \item \label{kf2} A uniform isotropic local law that allows to approximate $\W$ by a deterministic function on a contour.
\end{enumerate}
These facts together allow to locate the eigenvalues, and to isolate the outlier from the rest of the spectrum via Rouché's theorem, establishing that it is strongly separated\footnote{with the appropriate notion of `strong separation' in this setting.} at the proper timescales. The present case shares some important similarities. In particular, the analog of \ref{kf1} (stated below as Lemma \ref{lem-trajectories}) holds with the appropriate function $\W$, and some of the resulting properties, such as the almost sure non-intersection of trajectories, are analogous to the weakly non-Hermitian case as well. Apart from these formal analogies between the two models, the $UA$ model \eqref{def_UA} also involves some specific features. Especially, the isotropic local law \ref{kf2} is replaced by the regularity assumption on $\W_2$ (Assumption \ref{smart_ass}), which plays an equivalent role, but holds in fact more generally (see discussion in the Appendix). Another particular feature is the role of the parameter $\smash{\om_1^{(N)}}$ and the corresponding Assumption \ref{kick_ass}, which do not have an analog in the Hermitian setting.

\smallskip
\subsection*{Plan of the paper.}
Section \ref{sec_trajectories} contains our results concerning the trajectories of the $UA$ model \eqref{def_UA}.
We first derive general properties of the trajectories, only relying on Assumption~\ref{bad_ass}, in Section \ref{sec_prop_traj_simpl}, and then establish precise estimates on these trajectories under Assumptions~\ref{smart_ass} and \ref{kick_ass}, in Section \ref{sec_Rouché} -- from which Theorem \ref{main_theorem_intro} follows. Finally, in Section \ref{sec_CUE}, we focus on the particular CUE case, proving that it verifies all relevant assumptions, and also providing a simple proof that the timescales put forward in Theorem \ref{main_theorem_intro} are optimal. The relevance and generality of Assumption \ref{smart_ass} are discussed in more detail in the Appendix.
\bigskip

\section*{Acknowledgments}

The authors would like to thank Paul Bourgade, Djalil Chafaï, László Erd\H{o}s, Peter Forrester, Yan Fyodorov, and Boris Khoruzhenko for numerous discussions and comments. \\

G.D. gratefully acknowledges support from the Fondation des Sciences Mathématiques de Paris.
The work of J.R. is partially supported by ERC Advanced Grant `RMTBeyond' No.~101020331.

\section{Study of the trajectories}\label{sec_trajectories}

\subsection{Properties of the trajectories under the simplicity assumption.}\label{sec_prop_traj_simpl}
\begin{lemma}\label{lem-trajectories}
Let $z\in\D$ and consider the $UA$ model~\eqref{def_UA} with $t \in (-1,1)$. Then
\begin{displaymath}
z\in\Sp(G(t))\ \Leftrightarrow\ \W(z)=\frac{1}{1-t}.
\end{displaymath}
\end{lemma}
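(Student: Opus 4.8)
The plan is to compute the characteristic polynomial of $G(t) = UA(t)$ directly and use the matrix determinant lemma to extract a scalar equation. Write $A(t) = I_N - (1-t)vv^*$, so that $G(t) - zI_N = UA(t) - zI_N = U\bigl(I_N - (1-t)vv^*\bigr) - zI_N = (U - zI_N) - (1-t)Uvv^*$. Since we are looking at $z \in \D$ and $U$ is unitary, $U - zI_N$ is invertible (its eigenvalues lie on $\partial\D$, away from $z$), so we may factor $\det(G(t) - zI_N) = \det(U - zI_N)\cdot\det\bigl(I_N - (1-t)(U - zI_N)^{-1}Uvv^*\bigr)$.

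Next I would apply the matrix determinant lemma (Sylvester's identity) to the rank-one update: $\det\bigl(I_N - (1-t)(U-zI_N)^{-1}Uvv^*\bigr) = 1 - (1-t)\,v^*(U - zI_N)^{-1}Uv$. The first factor $\det(U - zI_N)$ is nonzero for $z \in \D$, so $z \in \Sp(G(t))$ if and only if $1 - (1-t)\,v^*(U - zI_N)^{-1}Uv = 0$. The remaining step is to recognize the scalar $v^*(U - zI_N)^{-1}Uv$ in terms of $\W(z) = v^*(I_N - zU^*)^{-1}v$. Multiplying and dividing by $U^*$ inside the resolvent: $(U - zI_N)^{-1}U = \bigl(U(I_N - zU^{-1})\bigr)^{-1}U = (I_N - zU^*)^{-1}U^{-1}U = (I_N - zU^*)^{-1}$, using $U^{-1} = U^*$. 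Hence $v^*(U-zI_N)^{-1}Uv = v^*(I_N - zU^*)^{-1}v = \W(z)$, and the eigenvalue condition becomes $1 - (1-t)\W(z) = 0$, i.e.\ $\W(z) = \tfrac{1}{1-t}$, as claimed.

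The argument is essentially a chain of routine identities, so there is no serious obstacle; the one point requiring a small amount of care is justifying that $\det(U - zI_N) \neq 0$ for $z \in \D$, which is exactly where the hypothesis $z \in \D$ enters. One should also note that $1 - t > 0$ for $t \in (-1,1)$, so dividing by $(1-t)$ is legitimate and the right-hand side $\tfrac{1}{1-t}$ is well defined (and indeed $\W$ is holomorphic on $\D$ since $\|zU^*\| = |z| < 1$, so the equation makes sense pointwise). If one wants to be careful about the equivalence being an equivalence of \emph{sets} rather than just of membership, one can observe that both sides count zeros: $z \in \D$ is an eigenvalue of $G(t)$ precisely when the meromorphic (here holomorphic) function $z \mapsto 1 - (1-t)\W(z)$ vanishes, which gives the stated characterization of the spectrum in $\D$.
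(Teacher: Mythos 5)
Your argument is correct and follows the same route as the paper: factor $\det(G(t)-z) = \det(U-z)\det(I_N - (1-t)(U-z)^{-1}Uvv^*)$, use invertibility of $U-z$ on $\D$, apply the matrix determinant lemma, and identify the resulting scalar as $\W(z)$. The only (welcome) addition is that you explicitly verify the identity $v^*(U-z)^{-1}Uv = v^*(I_N - zU^*)^{-1}v$ via $U^{-1}=U^*$, a step the paper leaves implicit when it writes $\W(z)=v^*(U-z)^{-1}Uv$.
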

\begin{proof}
By definition, $z$ is in the spectrum of $G(t)$ if and only if
\begin{displaymath}
0 = \det(G(t)-z) = \det(U-z- (1-t) U vv^*).
\end{displaymath}
As $|z|<1$, this is equivalent to
\begin{displaymath}
\det(I_N - (1-t) (U-z)^{-1} U vv^*)=0.
\end{displaymath}
Using Sylvester's identity and defining $\W(z) = v^* (U-z)^{-1} U v$, we find that $z$ is in the spectrum of $G(t)$ if and only if
\begin{displaymath}
1-(1-t)\W(z) = 0
\end{displaymath}
which is the claim. \end{proof}
This strong property is due to the perturbation having rank one. In particular, it implies that the trajectories of the eigenvalues are given by the zero level lines of the imaginary part of $\W$, i.e.,
\begin{equation}
\bigcup_{j=1}^N\{\lambda_j(t):t\in (-1,1)\}=\{z\in\D: \Im\W(z)=0\}.
\end{equation}

Another important property that follows from the simplicity assumption is the non-intersection of the trajectories.

\begin{theorem}[Non-intersection of trajectories]\label{thm-nonintersect}
Under Assumption \ref{bad_ass}, for $N\geq 5$, the trajectories of the eigenvalues almost surely do not intersect, i.e.,
\begin{equation}
\PP \left( \left\{\exists s,t \in(-1,1), \ \exists i, j \in [\![1,N]\!] \ : \ (s,i) \neq (t,j), \ \lambda_i(s)=\lambda_j(t) \right\} \right)=0.
\end{equation}
\end{theorem}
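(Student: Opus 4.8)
The plan is to reduce the statement to a codimension argument: the set of "bad" configurations $(U,v)$ for which some coincidence $\lambda_i(s)=\lambda_j(t)$ with $(s,i)\neq(t,j)$ occurs is contained in a set of Lebesgue measure zero on the sphere (for fixed $U$ with simple spectrum), so that integrating against the non-vanishing density of $v$ gives probability zero. By Lemma~\ref{lem-trajectories}, a point $z\in\D$ lies on a trajectory at time $t$ iff $\W(z)=1/(1-t)$; since $\W$ depends only on $(U,v)$ and not on $t$, and the map $t\mapsto 1/(1-t)$ is a bijection from $(-1,1)$ onto $(1/2,\infty)\subset\R$, a coincidence $\lambda_i(s)=\lambda_j(t)=z$ with $s\neq t$ would force $\W(z)$ to take two distinct values, which is impossible. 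Hence the only way trajectories can meet is $s=t$ with $i\neq j$, i.e.\ the function $\W$ has a \emph{multiple} preimage of some value $1/(1-t)$ at a point $z$, equivalently $\W(z)=1/(1-t)\in\R$ together with $\W'(z)=0$ (a double root), or more generally $z$ is a critical point of $\W$ whose critical value is real and lies in $(1/2,\infty)$.

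The core step is therefore to show that, almost surely in $v$, the random meromorphic function $\W(z)=v^*(I_N-zU^*)^{-1}v$ has no critical point $z\in\D$ at which the critical value is real. First I would fix $U$ with simple spectrum $\{e^{i\theta_1},\dots,e^{i\theta_N}\}$ and orthonormal eigenbasis $(u_j)$, so that
\[
\W(z)=\sum_{j=1}^N \frac{|\langle u_j,v\rangle|^2}{1-z e^{-i\theta_j}},
\]
a rational function of degree $N$ with simple poles at the $e^{i\theta_j}$ and residues $-e^{i\theta_j}|\langle u_j,v\rangle|^2$. Writing $c_j=|\langle u_j,v\rangle|^2\ge 0$ with $\sum c_j=1$, the critical points of $\W$ are the zeros of $\W'(z)=\sum_j c_j e^{-i\theta_j}/(1-ze^{-i\theta_j})^2$, a rational function whose numerator is a polynomial of degree $\le 2N-2$; generically there are finitely many. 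The condition that some critical point has real critical value is one real equation on the parameters; the plan is to show this fails for $(c_1,\dots,c_N)$ outside a measure-zero subset of the simplex, and then to transfer this to $v$: the pushforward of the density of $v$ onto the vector $(c_1,\dots,c_N)$ is absolutely continuous on the simplex (indeed it is a Dirichlet-type law when $v$ is uniform, and has a density in general since the map $v\mapsto(|\langle u_j,v\rangle|^2)_j$ is a submersion on a full-measure set), so a measure-zero set of $c$'s pulls back to a measure-zero set of $v$'s.

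To carry out the measure-zero claim on the simplex, I would argue by analyticity: consider the real-analytic map $F$ sending $(c,z)$ (with $z$ ranging over $\D$ minus the poles) to $(\Im\W(z),\Re\W'(z),\Im\W'(z))\in\R^3$ — vanishing of $F$ encodes "$z$ is a critical point with real critical value in the relevant range." For fixed generic $c$ this is three real equations in the two real variables $(\Re z,\Im z)$, hence overdetermined; one shows that for $c$ in a dense open set the system has no solution, using that $\W'$ has only finitely many zeros and that, as a function of $c$, the imaginary part of the critical value is a non-constant real-analytic function whose zero set is therefore a proper analytic subvariety, hence Lebesgue-null. The hypothesis $N\ge 5$ presumably enters here to guarantee enough free parameters (after accounting for the normalization $\sum c_j=1$ and possibly symmetries) that the system is genuinely overdetermined and the relevant analytic function is not identically zero; this dimension count is the point I would check most carefully.

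The main obstacle I anticipate is precisely establishing non-degeneracy — i.e.\ that the relevant real-analytic function on the simplex is not identically zero, so that its vanishing locus is a null set rather than everything. A clean way to handle this is to exhibit, for each $N\ge 5$, one explicit choice of $(\theta_j)$ and $(c_j)$ for which $\W$ has a critical point with non-real critical value (or no critical point in $\D$ at all), and then invoke analyticity to propagate genericity; alternatively, one can perturb from a degenerate configuration. Once non-degeneracy is in hand, the rest is the soft chain: null set in $c$ $\Rightarrow$ null set in $v$ (by absolute continuity of the pushforward) $\Rightarrow$ probability zero (by the non-vanishing density assumption on $v$ in Assumption~\ref{bad_ass}), combined with the observation from the first paragraph that $s\neq t$ coincidences are impossible because $\W$ is single-valued and $t\mapsto 1/(1-t)$ is injective.
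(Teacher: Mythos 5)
Your reduction is the same as the paper's: since $\W$ is single-valued and $t\mapsto 1/(1-t)$ is injective, distinct-time coincidences $s\neq t$ are immediately excluded, and a same-time coincidence forces $z$ to be a critical point of $\W$ with real critical value; you then push the problem to the simplex of weights $c_j=|\langle u_j,v\rangle|^2$, on which $v$ induces a density. However, the step you flag as ``the point I would check most carefully'' is exactly the crux of the theorem, and the proposal does not supply it. A real-analyticity argument gives nothing until you show the relevant function is not identically zero, and your suggestion to ``exhibit one explicit configuration'' or ``perturb from a degenerate one'' is not carried out. As written, this is a genuine gap: without the non-degeneracy, the zero set could be everything.

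The paper closes this gap with a deterministic geometric lemma, not a genericity argument. Fix $z\in\D\setminus\{0\}$ and write the two conditions $\Im\W(z)=0$ and $z\W'(z)=0$ as three real linear constraints $\langle X,Y_1(z)\rangle=\langle X,Y_2(z)\rangle=\langle X,Y_3(z)\rangle=0$ on $X=(c_j)_j$, where $Y_1$ collects $\Im(1-z\re^{-\ri\theta_j})^{-1}$ and $Y_2,Y_3$ the real and imaginary parts of the summands of $z\W'(z)$. The key claim is that $Y_1(z)\notin\mathrm{Span}_\R(Y_2(z),Y_3(z))$ for every $z$ and every simple spectrum, once $N\geq 5$: mapping $j\mapsto(a_j,b_j):=(1-z\re^{-\ri\theta_j})^{-1}$ sends the $N$ distinct phases to $N$ distinct points on a circle $\mathscr{C}_{|z|}$, and any linear relation $Y_1=\alpha Y_2+\beta Y_3$ would force these $N$ points to also lie on a conic; Bézout bounds $|\mathscr{C}_{|z|}\cap\text{conic}|\leq 4$, contradiction. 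This is where $N\geq 5$ enters, and it replaces your appeal to analyticity by an exact dimension count. With it, $\PP_v(\Im\W(z)=0\mid\W'(z)=0)=0$ for each fixed $z$, because a nontrivial linear form in $X$ vanishes with probability zero on the simplex. The paper then handles the fact that the critical points $Z_k$ are themselves $v$-dependent (your Fubini-on-$(c,z)$ route has the same issue, since critical points move, merge, and can leave $\D$) via a small uniform-random-permutation device before a union bound over the finitely many $Z_k$. So: same skeleton, but your proposal is missing precisely the ingredient that makes the skeleton a proof.
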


\begin{proof}
The proof is analogous to that of~\cite[Thm.~2]{DubachErdos} with a few adjustments to account for the different geometry. We prove it for any fixed $U$ with a simple spectrum, only relying on the randomness of $v$ (i.e., according to the probability measure $\PP_v$). If $\lambda_i(s)=\lambda_j(t)$ for some $i,j\in [\![1,N]\!]$ and $s,t\geq(-1,1)$, Lemma~\ref{lem-trajectories} implies that
\begin{equation}\label{eq-nonintersect1}
\W(z)=\frac{1}{1-s}=\frac{1}{1-t}
\end{equation}
and thus $t=s$. So we only need to consider the possible intersection of two trajectories $\lambda_i$ and $\lambda_j$ ($i\neq j$) at the same time $t$. This implies that the function $\W(z)-\frac{1}{1-t}$ vanishes at $z\in\D$ with multiplicity at least two. Hence, any intersection point $z_0$ lies in the set
\begin{displaymath}
\mathcal{S}:=
\left\{z\in\D:\Im\W(z)=0\right\}
\cap
\left\{z\in\D:\W'(z)=0\right\}.
\end{displaymath}
We show that almost surely, there is no $z_0$ that satisfies both conditions. First, note that
\begin{displaymath}
\W'(z)=\sum_{j=1}^N|\langle u_j|v\rangle|^2\frac{\re^{-\ri\theta_j}}{(1-z\re^{-\ri\theta_j})^2},
\end{displaymath}
where $\re^{\ri\theta_1},\dots,\re^{\ri\theta_N}$ and $u_1,\dots,u_N$ denote the eigenvalues of $U$ and the associated unit eigenvectors, respectively. Hence, the condition $\W'(z_0)=0$ is equivalent to $z_0$ being the root of a complex polynomial of degree $2N-2$ ($\PP_v$-almost surely) that does not vanish on the unit circle. In particular, there is a finite number of such points. Allowing for multiplicity, we denote them by $(Z_k)_{k=1}^{2N-2}$.

\medskip
Next, note that $0\notin\mathcal{S}$ by inspection.
We may thus assume $z\neq0$ and replace the condition $\W'(z)=0$ by $z\W'(z)=0$. For any $z\notin\partial \D$, define the real vectors
\begin{displaymath}
Y_1(z):=\left(\Im \frac{1}{1-z\re^{-\ri\theta_j}}\right)_{j=1}^N,\quad Y_2(z):=\left(\Re \frac{z\re^{-\ri\theta_j}}{(1-z\re^{-\ri\theta_j})^2}\right)_{j=1}^N,\quad Y_3(z):=\left(\Im \frac{z\re^{-\ri\theta_j}}{(1-z\re^{-\ri\theta_j})^2}\right)_{j=1}^N
\end{displaymath}
as well as
\begin{displaymath}
X:=(|\langle u_j|v\rangle|^2)_{j=1}^N.
\end{displaymath}
With this notation, the condition $\Im\W(z)=0$ can be written as $\langle X|Y_1(z)\rangle$ while $z\W'(z)=0$ translates to $\langle X|Y_2(z)\rangle=\langle X|Y_3(z)\rangle=0$. The remainder of the proof relies on the following (deterministic) observation.

\begin{lemma}\label{lem-geometry}
Let $N\geq5$. For any $z \in \D \backslash \{0\}$, it holds that $Y_1(z)\notin\mathrm{Span}_\R(Y_2(z),Y_3(z))$.
\end{lemma}

\begin{proof}
For $z\neq0$ with $|z|<1$ denote
\begin{displaymath}
a_j+\ri b_j=\frac{1}{1-z\re^{-\ri\theta_j}}
\end{displaymath}
which can be rearranged to give
\begin{displaymath}
|z|=|z\re^{-\ri\theta_j}|=\left|1-\frac{1}{a_j+\ri b_j}\right|
\end{displaymath}
i.e., $(a_j+\ri b_j)^{-1}$ lies on a circle with center $1$ and radius $|z|$. Hence, by inversion, $a_j+\ri b_j$ lies on a circle with center $(1-|z|^2)^{-1}$ and radius $|z|(1-|z|^2)^{-1}$. We denote this circle by $\mathscr{C}_{|z|}$. As the phases $\theta_j$ are distinct by assumption, $(a_j,b_j)$ are $N$ distinct points on $\mathscr{C}_{|z|}$.

\medskip
Assume now that $Y_1(z)=\alpha Y_2(z)+\beta Y_2(z)$ for some $\alpha,\beta\in\R$. Since
\begin{displaymath}
\frac{z\re^{-\ri\theta_j}}{(1-z\re^{-\ri\theta_j)^2}}=\frac{1-(1-z\re^{-\ri\theta_j})}{(1-z\re^{-\ri\theta_j)^2}}=(a_j+\ri b_j)^2-(a_j+\ri b_j),
\end{displaymath}
and $Y_1(z)=b_j$, we can rewrite the linear dependence condition as
\begin{displaymath}
b_j=\alpha(a_j^2-b_j^2-a_j)+\beta(2a_jb_j-b_j).
\end{displaymath}
Note that this describes a hyperbola $\mathscr{H}_{\alpha,\beta}$ (in the degenerate case, we obtain a union of two lines). Bézout's theorem for curves gives a bound as to the number of intersection points between this curve and a circle:
\begin{displaymath}
N\leq|\mathscr{C}_{|z|}\cap\mathscr{H}_{\alpha_\beta}|\leq 4,
\end{displaymath}
which is the desired contradiction.
\end{proof}

Lemma~\ref{lem-geometry} implies that
\begin{displaymath}
\forall z\in\D,\quad \PP_v(\Im\W(z)=0 \ | \ \W'(z)=0)=0,
\end{displaymath}
as $Y_1$ is linearly independent of $Y_2(z),Y_3(z)$ and $X$ is distributed with a non-vanishing density on the simplex
\begin{displaymath}
\left\{(x_1,\dots,x_N)\ :\  x_j\geq0, \sum_{j=1}^Nx_j=1\right\}.
\end{displaymath}
by Assumption \ref{bad_ass}. We aim to deduce that
\begin{displaymath}
\PP_v(\exists z\in\D: \Im\W(z)=0\ \wedge\ \W'(z)=0)=0
\end{displaymath}
to conclude that trajectories originating from two distinct eigenvalues $\PP_v$-a.s. do not intersect. Let $\pi\in\mathfrak{S}_{2N}$ be a uniformly random permutation and let $\PP$ denote the probability with respect to $\pi$ as well as $v$. Moreover, define
\begin{displaymath}
N_z:=|\{k \ : \ Z_k=z\}|
\end{displaymath}
and recall that the points $Z_k$ were defined as the zeros of $\W'$, i.e., $\W'(z)=0$ if and only if $N_z\geq1$. The randomness in $\pi$ ensures that
\begin{displaymath}
\PP(Z_{\pi(k)} =z \ | \ N_z\geq1)=\sum_{m=1}^{2N-2}\PP(Z_{\pi(k)}=z \ | \ N_z=m)\geq\frac{1}{2N}
\end{displaymath}
so that we can write
\begin{align*}
0=\PP(\Im\W(z)=0 \ | \ N_z\geq1)&\geq \PP(\Im \W(z)=0 \ | \ Z_{\pi(k)}=z)\PP(Z_{\pi(k)}=z \ | \ N_z\geq1)\\
&\geq\frac{1}{2N}\PP(\Im\W(Z_{\pi(k)})=0 \ | \ Z_{\pi(k)}=z)
\end{align*}
for fixed $k=1,\dots,2N-2$. This implies $\PP(\Im \W(Z_k)=0 \ | \ Z_{\pi(k)}=z)=0$ and it follows that
\begin{displaymath}
\PP(\Im \W(Z_k)=0)=0
\end{displaymath}
by integrating out the conditioning. As there are only finitely many choices for $k$, a union bound yields
\begin{displaymath}
\PP_v(\exists k:\Im \W(Z_k)=0)=\PP(\exists k:\Im \W(Z_{\pi(k)})=0)\leq\sum_{k=1}^{2N-2}\PP(\Im \W(Z_{\pi(k)})=0)=0,
\end{displaymath}
implying $\PP_v(\mathcal{S})=0$. We conclude that
\begin{displaymath}
\PP_v(\exists t \in (-1,1), i\neq j: \lambda_j(t)=\lambda_j(t))=0
\end{displaymath}
which completes the proof.
\end{proof}

Non-intersection of trajectories is an essential input, as it allows to identify each $\la_k(t)$ as the unique continuous trajectory linking the eigenvalue $\re^{\ri \theta_k}$ at time $t=1$ to one of the eigenvalues at any time $t$. In particular, we may consider the trajectory of the eigenvalue that eventually becomes an outlier (for $t$ approaching $0$) and give meaning to concepts such as the `origin' of the outlier, the shape, and length of individual trajectories, etc. A realization with two trajectories getting very close to one another is given in Figure \ref{fig2}.\\

Remarkably, the evolution of the eigenvalues of the $UA$ model is described by a closed system of differential equations that governs the behavior of the trajectories. We provide these equations below for $t\in(0,1]$ (the equations for $t\in[-1,0)$ can be derived similarly). Note that this result is not used as input for our main theorems. However, it is an important feature of these dynamics, and may be of interest for future studies of this model.

\begin{theorem}[Differential Equations] Assume that $U$ satisfies Assumption~\ref{bad_ass}. We denote its eigenvalues by $\re^{\ri\theta_1},\dots,\re^{\ri\theta_N}$ with $\theta_j\in[0,2\pi)$ and the associated unit eigenvectors by $u_1,\dots,u_N$. Let $v$ be a unit vector and $t\in(0,1]$. Then the evolution of the eigenvalues $\lambda_1(t),\dots,\lambda_N(t)$ of $G(t)=U(I_N-(1-t)vv^*)$ is governed by the differential equations
\begin{equation}\label{eq-ODEs}
\lambda_j'(t)=\frac{1-|\lambda_j(t)|^2}{t(t^2-1)}
\left(\prod_{k=1}^N\lambda_k(t)\right)
\prod_{k\neq j}\frac{\lambda_j\overline{\lambda_k(t)}-1}{\lambda_j(t)-\lambda_k(t)},\quad j=1,\dots,N,
\end{equation}
with initial conditions $\lambda_j(1)=\re^{\ri\theta_j}$ and
\begin{equation}\label{eq-ODEinitial}
\lambda_j'(1)=\re^{\ri\theta_j}|\langle u_j|v\rangle|^2,\quad j=1,\dots,N.
\end{equation}
\end{theorem}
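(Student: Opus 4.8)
The plan is to start from the characterization in Lemma~\ref{lem-trajectories}, which says that the eigenvalues $\lambda_1(t),\dots,\lambda_N(t)$ are exactly the solutions $z\in\D$ of $\W(z)=(1-t)^{-1}$, and to differentiate this relation implicitly in $t$. Writing $\W(\lambda_j(t))=(1-t)^{-1}$ and differentiating gives $\W'(\lambda_j(t))\,\lambda_j'(t)=(1-t)^{-2}$, hence
\begin{equation}\label{eq-proposal-first}
\lambda_j'(t)=\frac{1}{(1-t)^2\,\W'(\lambda_j(t))}.
\end{equation}
The simplicity assumption (via Theorem~\ref{thm-nonintersect}) guarantees that for generic $t$ the trajectories are simple and $\W'(\lambda_j(t))\neq 0$, so this is well-defined. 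The remaining work is entirely algebraic: I must re-express the right-hand side of \eqref{eq-proposal-first} in terms of the $\lambda_k(t)$ only, eliminating the vector $v$ and the eigenbasis of $U$, to arrive at \eqref{eq-ODEs}.

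To do this I would use a product representation of $\W$. Since $\W(z)=v^*(U-z)^{-1}Uv=v^*\,zU^*(I-zU^*)^{-1}v+1$ wait — more cleanly, from the proof of Lemma~\ref{lem-trajectories}, $1-(1-t)\W(z)=\det\bigl(I_N-(1-t)(U-z)^{-1}Uvv^*\bigr)$, and by Sylvester this determinant equals $\det(G(t)-z)/\det(U-z)$. Since $\det(U-z)=(-1)^N\prod_k(\re^{\ri\theta_k}-z)\cdot(-1)^N$... I would write $\det(zI-G(t))=\prod_{k}(z-\lambda_k(t))$ and $\det(zI-U)=\prod_k(z-\re^{\ri\theta_k})$, so that
\begin{displaymath}
1-(1-t)\W(z)=\frac{\prod_{k=1}^N(z-\lambda_k(t))}{\prod_{k=1}^N(z-\re^{\ri\theta_k})}.
\end{displaymath}
Taking the logarithmic derivative in $z$ and then evaluating a suitable combination at $z=\lambda_j(t)$ produces $\W'(\lambda_j(t))$ as a product over $k\neq j$ of factors $(\lambda_j-\lambda_k)$ divided by $(\lambda_j-\re^{\ri\theta_k})$. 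The factors involving $\re^{\ri\theta_k}$ must then be removed: here I would invoke the identity $G(t)^{-1}=G(t^{-1})^*$ noted in the introduction (equivalently, the relation between $G(t)$ and the unitary part of $U$), which yields $\prod_k(\lambda_j-\re^{\ri\theta_k})=c(t)\prod_k(\lambda_j\ovr{\lambda_k(t)}-1)$ up to an explicit scalar, together with $\prod_k\lambda_k(t)=\det G(t)=t\det U$ and $\prod_k\re^{\ri\theta_k}=\det U$. Combining these and tracking the scalar prefactor — which is where the $t(t^2-1)$ and the $1-|\lambda_j|^2$ appear — gives \eqref{eq-ODEs}.

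For the initial conditions, at $t=1$ we have $G(1)=U$, so $\lambda_j(1)=\re^{\ri\theta_j}$; for $\lambda_j'(1)$ I would use \eqref{eq-proposal-first} together with $\W'(\re^{\ri\theta_j})$. Since $\W(z)=\sum_k|\langle u_k|v\rangle|^2\frac{1}{1-z\re^{-\ri\theta_k}}$ has a simple pole at $z=\re^{\ri\theta_j}$ with residue $-\re^{\ri\theta_j}|\langle u_j|v\rangle|^2$, one has $(1-t)\to 0$ and $\W'(\lambda_j(t))\to\infty$ at matching rates; a careful local expansion of $\W$ near its pole, balanced against $(1-t)^{-2}$, yields $\lambda_j'(1)=\re^{\ri\theta_j}|\langle u_j|v\rangle|^2$ as in \eqref{eq-ODEinitial}.

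\medskip
\textbf{Main obstacle.} The conceptual content — differentiating the level-set equation — is immediate; the real difficulty is the bookkeeping in the algebraic reduction, namely correctly converting $\prod_k(\lambda_j-\re^{\ri\theta_k})$ into $\prod_{k}(\lambda_j\ovr{\lambda_k}-1)$ via the $G(t)^{-1}=G(t^{-1})^*$ symmetry and pinning down the exact scalar so that the prefactor comes out as $\frac{1-|\lambda_j|^2}{t(t^2-1)}$ with the right sign. I would also need to handle the measure-zero set of $t$ where $\W'(\lambda_j(t))=0$ by a continuity/density argument, though Theorem~\ref{thm-nonintersect} makes that routine.
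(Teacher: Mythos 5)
Your approach is correct and genuinely different from the paper's. The paper starts from the Hadamard first-variation formula $\lambda_j'(t)=\langle L_j|G'(t)|R_j\rangle$ with bi-orthogonal left/right eigenvectors, rewrites the overlap via $A^{-1}$, and then uses an explicit Schur-form construction (imported from \cite[Lem.~4]{Dubach_TUE}) together with a recursion for the left eigenvector to evaluate $\langle L_j|v\rangle\langle v|R_j\rangle$ purely in terms of the eigenvalues. Your route --- implicitly differentiating the master equation $\W(\lambda_j(t))=(1-t)^{-1}$ and then using the determinantal representation $1-(1-t)\W(z)=\det(G(t)-z)/\det(U-z)$ --- is conceptually cleaner, bypasses the eigenvector machinery entirely, and needs only Lemma~\ref{lem-trajectories} as input. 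Both methods still must eliminate the eigendata of $U$, which is where the real work lies.

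The algebraic bottleneck you flag is closed by one further use of the same identity: apply it at the dual parameter $t'=t^{-1}$ and evaluate at $z=\lambda_j(t)$. Since $\W(\lambda_j(t))=(1-t)^{-1}$, a one-line computation gives $1-(1-t^{-1})\W(\lambda_j(t))=(t+1)/t$, whence
\begin{displaymath}
\prod_{k}\bigl(\lambda_j - 1/\ovr{\lambda_k(t)}\bigr) \;=\; \det\bigl(\lambda_j - G(t^{-1})\bigr) \;=\; \frac{t+1}{t}\,\det(\lambda_j - U) \;=\; \frac{t+1}{t}\prod_k\bigl(\lambda_j - \re^{\ri\theta_k}\bigr),
\end{displaymath}
the first equality using $\Sp(G(t^{-1}))=\{1/\ovr{\lambda_k(t)}\}$, which follows from $G(t)^{-1}=G(t^{-1})^*$. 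Combining this with $\lambda_j\ovr{\lambda_k}-1=\ovr{\lambda_k}(\lambda_j-1/\ovr{\lambda_k})$, $\prod_k\lambda_k(t)=t\det U$, and $|\det U|=1$ yields a closed expression in the $\lambda_k(t)$ only, as desired. One caveat worth flagging: carrying the bookkeeping through produces the prefactor $(1-|\lambda_j|^2)/\bigl(t(1-t^2)\bigr)$, i.e., the \emph{opposite} sign to \eqref{eq-ODEs} as printed. The trivial $N=1$ case $G(t)=t\re^{\ri\theta}$, for which $\lambda_1'(t)=\re^{\ri\theta}$ while \eqref{eq-ODEs} evaluates to $-\re^{\ri\theta}$, confirms that the sign discrepancy lies in the stated formula rather than in your derivation, so do not let it derail the computation.
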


\begin{proof}
Let $L_1,\dots, L_N, R_1,\dots, R_N$ be a bi-orthogonal system of left and right eigenvectors associated with $G(t)$. We start with the basic perturbative identity
\begin{equation}\label{eq-pert_id}
\lambda_j'(t)=\langle L_j|G'(t)|R_j\rangle=\langle L_j|U|v\rangle\langle v|R_j\rangle
\end{equation}
to obtain a dynamic description of the eigenvalues (cf. proof of~\cite[Thm.~4]{DubachErdos}). Using that $\langle L_j|(UA)=\lambda_j\langle L_j|$ and that $A^{-1}=I-(1-t^{-1})vv^*$, the relation~\eqref{eq-pert_id} can be rewritten as
\begin{equation}\label{eq-pert_id2}
\lambda_j'(t)=\frac{\lambda_j(t)}{t}\langle L_j|v\rangle\langle v|R_j\rangle.
\end{equation}
For $t=1$, we have $G(t)=U$ and the eigenvalues are given by $\lambda_j(1)=\re^{\ri\theta_j}$. Moreover, $L_j=R_j=u_j$, which yields the initial condition stated in~\eqref{eq-ODEinitial}. For $t\in(0,1)$, note that the quantity $\langle L_j|v\rangle\langle v|R_j\rangle$ is invariant under a unitary change of basis. It is thus sufficient to compute it for a Schur form $T$ of $G$. W.l.o.g. let $\lambda_1,\dots,\lambda_N$ be the order in which the eigenvalues appear on the diagonal of $T$, i.e., $\lambda_j=T_{jj}$. By exchangeability, it is sufficient to show~\eqref{eq-ODEs} for $j=1$. Writing $G=STS^*$ with a suitable unitary matrix $S$, note that
\begin{displaymath}
TT^*=SGG^*S^*=SAA^*S=I-(t^2-1)Svv^*S^*
\end{displaymath}
i.e., $TT^*$ is a rank-one perturbation of identity. Let $\widetilde{v}:=Sv$. By \cite[Lem.~4]{Dubach_TUE}, we can express the columns of $T$ as well as the entries of $\sqrt{t^2-1}\widetilde{v}$ in terms of the eigenvalues $\lambda_1,\dots,\lambda_N$. In particular, the $k$-th column of $T$ is of the form 
\begin{displaymath}
(T_{kj})_{j=1}^N=\begin{pmatrix} \tau_{k}\\ \lambda_k\\0\end{pmatrix},\quad \tau_k=\frac{|\lambda_k|^2-1}{\overline{\lambda_k}}\frac{1}{\widetilde{v}_k}\widetilde{v}^{(k-1)}
\end{displaymath}
with $\widetilde{v}^{(k-1)}$ being the column vector containing the first $k-1$ entries of $\widetilde{v}$. As $T$ is (upper) triangular, we further have $R_1=e_1$ and thus $\langle \widetilde{v}|R_1\rangle=\overline{\widetilde{v}_1}$. For the following argument, denote $L_1=(1,\ell_2,\dots,\ell_N)$ and $L_1^{(d)}=(1,\ell_2,\dots,\ell_d)$ for $d=1,\dots,N$. This vector satisfies the recursion
\begin{equation}\label{eq-Lrecursion}
\ell_{k+1}=\frac{1}{\lambda_1-\lambda_{k+1}}\langle L_1^{(k)}|\tau_{k+1}\rangle=\frac{1}{\lambda_1-\lambda_{k+1}}\frac{|\lambda_{k+1}|^2-1}{\overline{\lambda_{k+1}}\widetilde{v}_{k+1}}\langle L_1^{(k)}|\widetilde{v}^{(k)}\rangle,
\end{equation}
where $\tau_{k+1}$ denotes the first $k$ entries of the $(k+1)$-th column of $T$. We use this connection to compute $\langle L_1|\widetilde{v}\rangle$ recursively. The initial condition is given by
\begin{displaymath}
\langle L_1^{(1)}|\widetilde{v}^{(1)}\rangle=\widetilde{v}_1
\end{displaymath}
and we iterate the recursion by
\begin{align*}
\langle L_1^{(k+1)}|\widetilde{v}^{(k+1)}\rangle&=\langle L_1^{(k)}|\widetilde{v}^{(k)}\rangle+\ell_{k+1}\widetilde{v}_{k+1}\\
&=\left(1+\frac{1}{\lambda_1-\lambda_{k+1}}\frac{|\lambda_{k+1}|^2-1}{\overline{\lambda_{k+1}}}\right)\langle L_1^{(k)}|\widetilde{v}^{(k)}\rangle\\
&=\frac{\lambda_1\overline{\lambda_{k+1}}-1}{\overline{\lambda_{k+1}}(\lambda_1-\lambda_{k+1})}\langle L_1^{(k)}|\widetilde{v}^{(k)}\rangle
\end{align*}
using~\eqref{eq-Lrecursion}. This yields
\begin{displaymath}
\langle L_1|\widetilde{v}\rangle=\langle L_1^{(N)}|\widetilde{v}^{(N)}\rangle=\widetilde{v}_1\prod_{k=2}^N\left(\frac{\lambda_1\overline{\lambda_{k+1}}-1}{\overline{\lambda_{k+1}}(\lambda_1-\lambda_{k+1})}\right).
\end{displaymath}
Hence, by~\eqref{eq-pert_id2}, we obtain for any $t\in(0,1)$ that
\begin{align*}
\lambda_1'(t)&=\frac{\lambda_1(t)}{t}\langle L_1|v\rangle\langle v|R_1\rangle=\frac{\lambda_1(t)}{t(t^2-1)}\langle L_1|\widetilde{v}\rangle\langle \widetilde{v}|R_1\rangle\\
&=\frac{\lambda_1(t)|\widetilde{v}_1|^2}{t(t^2-1)}\prod_{k=2}^N\left(\frac{\lambda_1(t)\overline{\lambda_k(t)}-1}{\overline{\lambda_k(t)}(\lambda_1(t)-\lambda_k(t))}\right).
\end{align*}
Moreover, replacing $|\widetilde{v}_1|^2$ by 
\begin{displaymath}
|\widetilde{v}_1|^2=(1-|\lambda_1|^2)\prod_{k=2}^N|\lambda_k|^2
\end{displaymath}
using~\cite[Lem.~4]{Dubach_TUE}, we obtain~\eqref{eq-ODEs}, which is a function of $t$ and the eigenvalues only.
\end{proof}

\begin{figure}[b!]
\begin{minipage}[c]{.3\textwidth}
\begin{center}
\includegraphics[width=\textwidth]{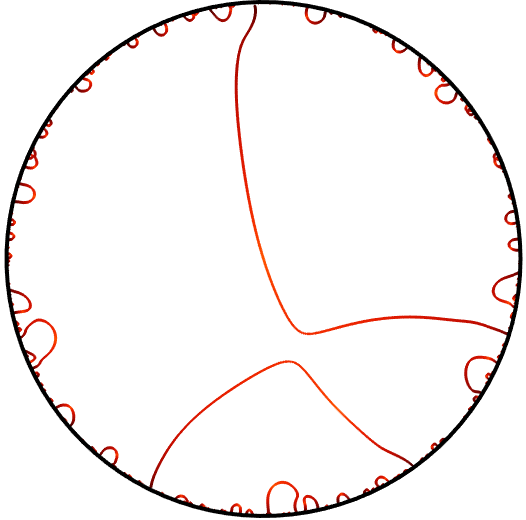} 
\end{center}
\end{minipage}
\begin{minipage}[c]{.65\textwidth}
\captionof{figure}{Trajectories of the eigenvalues of the $UA$ model of size $500\times500$. The evolution of the time parameter $t \in (-1,1)$ is represented by shades of red. On this particular realization, one extreme trajectory gets close to the trajectory of the outlier, and appears to be repelled away from it with high velocity, according to the singularity of the ODE \eqref{eq-ODEs}.}\label{fig2}
\end{minipage}
\end{figure}

\newpage
\subsection{Estimates on the trajectories under the Regularity Assumption}\label{sec_Rouché}

We now state and prove the estimates on the trajectories, under Assumptions \ref{smart_ass} and \ref{kick_ass}, that constitute our main results. The method consists in comparing the function $\W$ to its affine part $s_1$ -- see definitions \eqref{def_om1} and \eqref{def_Ws}. As the eigenvalues are characterized by $\W(z)=\frac{1}{1-t}$ (by Lemma \ref{lem-trajectories}), we denote by $z_t$ the unique solution of the equation $s_1(z) = \frac{1}{1-t}$, that is
\begin{equation}\label{def_zt}
    z_t : = \frac{\sqrt{N}}{\om_1^{(N)}} \frac{t}{1-t},
\end{equation}
so that, in particular, $z_t$ is the expected location of the outlier in the sub-critical regime; but also, $|z_t|$ will appear as a relevant scale parameter in all regimes. \\

In order to deduce the location of eigenvalues from the comparison of $\W$ and $s_1$, we rely on the classical tool of complex analysis known as Rouché's theorem, which we recall:

\begin{theorem}[Rouché's Theorem]
Let $f$ and $g$ be two holomorphic functions on a domain $\Omega\subset\C$ with closed and simple boundary $\partial\Omega$. If $|f-g| < |g|$ on $\partial\Omega$, then $f$ and $g$ have the same number of zeros (counted with multiplicity) inside $\Omega$.
\end{theorem}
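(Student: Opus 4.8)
The plan is to deduce Rouché's theorem from the argument principle, which I take as the basic input. First I would record the two consequences of the hypothesis: on $\partial\Omega$ one has $|g|>|f-g|\geq 0$, so $g$ does not vanish on $\partial\Omega$, and $|f|\geq|g|-|f-g|>0$, so $f$ does not vanish on $\partial\Omega$ either. Since $f$ and $g$ are holomorphic on $\Omega$ and, by the hypothesis on $\partial\Omega$, not identically zero, each has only finitely many zeros inside $\Omega$; denoting by $Z_f$ and $Z_g$ these numbers counted with multiplicity, the argument principle gives
\[
Z_f=\frac{1}{2\pi\ri}\oint_{\partial\Omega}\frac{f'(z)}{f(z)}\,\dd z,\qquad Z_g=\frac{1}{2\pi\ri}\oint_{\partial\Omega}\frac{g'(z)}{g(z)}\,\dd z .
\]

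To identify these two integrals I would run the standard homotopy argument. For $s\in[0,1]$ set $f_s:=g+s(f-g)$, so that $f_0=g$ and $f_1=f$. On $\partial\Omega$ we have $|f_s|\geq|g|-s|f-g|\geq|g|-|f-g|>0$, hence $f_s$ is zero-free on $\partial\Omega$ for every $s$, and the quantity
\[
N(s):=\frac{1}{2\pi\ri}\oint_{\partial\Omega}\frac{f_s'(z)}{f_s(z)}\,\dd z
\]
is well defined. The integrand is jointly continuous in $(s,z)$ on the compact set $[0,1]\times\partial\Omega$ (its denominator being bounded away from $0$ there), so $N$ is a continuous function of $s$; being integer-valued by the argument principle, it is constant. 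Therefore $Z_f=N(1)=N(0)=Z_g$, which is the claim.

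An equally short alternative, which I would at least mention, is the ``dog-on-a-leash'' version: the quotient $h:=f/g$ is meromorphic in a neighbourhood of $\overline{\Omega}$, with zeros at the zeros of $f$ and poles at the zeros of $g$ (all in the interior), and the hypothesis reads $|h-1|<1$ on $\partial\Omega$, so $h(\partial\Omega)$ is contained in the disc $D(1,1)$, which is simply connected and omits the origin; hence the winding number of $h$ along $\partial\Omega$ about $0$ vanishes, and since $h'/h=f'/f-g'/g$ the argument principle for meromorphic functions gives $0=Z_f-Z_g$. I expect no genuine obstacle: the statement is classical, and the only points deserving a word of care are the topological input --- that $\partial\Omega$ being a simple closed curve makes Cauchy's theorem and the argument principle applicable, which I would simply cite (Jordan curve theorem together with the homology form of Cauchy's theorem) --- and, in the quotient version, the harmless observation that a common zero of $f$ and $g$ inside $\Omega$ cancels in $Z_f-Z_g$ and corresponds to a point where $h$ has neither a zero nor a pole. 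Since Rouché's theorem is used only as a black box in what follows, I would keep the proof to the few lines above, most likely presenting only the homotopy argument.
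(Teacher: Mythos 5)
The paper states Rouché's theorem purely as a recalled classical tool and gives no proof of it, so there is no in-paper argument to compare against. Your homotopy proof (deforming $f_s = g + s(f-g)$, noting $f_s$ is zero-free on $\partial\Omega$, and using that the integer-valued, continuous function $N(s)$ given by the argument principle is constant) is a standard and correct derivation; the dog-on-a-leash variant you sketch is also correct. The only point worth a pedantic footnote is that applying the integral form of the argument principle over $\partial\Omega$ tacitly requires $f$ and $g$ (hence $f_s$) to extend holomorphically to a neighbourhood of $\overline{\Omega}$, or at least continuously to the boundary with some rectifiability of $\partial\Omega$ --- a regularity hypothesis the paper's informal statement leaves implicit and which is harmless in the contexts where Rouché's theorem is actually invoked there.
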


The next proposition identifies a domain on which the Rouché inequality holds for the functions at stake here.

\begin{proposition}\label{prop_Rouche_domain_R} Under Assumption \ref{smart_ass}, for every $\eta, \delta>0$, it holds with overwhelming probability that for every $t \in (-1,1)$, the functions 
\begin{equation}\label{functions_ft_gt}
f_t(z) = \W(z) - \frac{1}{1-t} \quad \text{and} \quad g_t(z) = s_1(z) - \frac{1}{1-t}
\end{equation}
verify the inequality $|f_t - g_t|<|g_t|$ on the domain $\mathscr{R}_{t,\eta} \cap \D_\delta$, with
\begin{equation}\label{Rouche_domain}
\mathscr{R}_{t,\eta} := \left\{ z \ : \ \frac{N^{\eta} |z|^2}{1-|z|} < \left|\om_1^{(N)} \right| |z-z_t|  \right\}.
\end{equation}
\end{proposition}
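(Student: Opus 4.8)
The plan is to unwind the definitions and observe that the inequality defining the domain $\mathscr{R}_{t,\eta}$ in \eqref{Rouche_domain} is, up to a harmless power of $N$, exactly the Rouché inequality once Assumption \ref{smart_ass} is invoked. First, I would note that by the identity $\W = s_1 + \W_2$ and the definitions \eqref{functions_ft_gt}, the difference $f_t(z) - g_t(z) = \W(z) - s_1(z) = \W_2(z)$ carries no $t$-dependence. Next, since $z_t$ from \eqref{def_zt} is by construction the unique zero of the affine function $g_t = s_1 - \tfrac{1}{1-t}$, which has slope $\om_1^{(N)}/\sqrt N$, one has $g_t(z) = s_1(z) - s_1(z_t) = \tfrac{\om_1^{(N)}}{\sqrt N}(z-z_t)$, and hence $|g_t(z)| = \tfrac{|\om_1^{(N)}|}{\sqrt N}\,|z - z_t|$. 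So the claim reduces to showing $|\W_2(z)| < \tfrac{|\om_1^{(N)}|}{\sqrt N}\,|z-z_t|$ for every $z \in \mathscr{R}_{t,\eta} \cap \D_\delta$.

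To get this, I would apply Assumption \ref{smart_ass} with error exponent $\eps := \eta/2$ (any $\eps \in (0,\eta)$ would do): by the definition of uniform stochastic domination, the single event
\[
\Omega^{(N)} := \Big\{\, |\W_2(z)| \le N^{\eta/2}\,\tfrac{|z|^2}{\sqrt N\,(1-|z|)} \ \text{ for all } z \in \D_\delta \,\Big\}
\]
has overwhelming probability, and crucially it does not depend on $t$. On $\Omega^{(N)}$, fix any $t \in (-1,1)$ and any $z \in \mathscr{R}_{t,\eta} \cap \D_\delta$; then $1 - |z| > N^{-\delta}>0$ and, by \eqref{Rouche_domain}, $\tfrac{N^{\eta}|z|^2}{1-|z|} < |\om_1^{(N)}|\,|z-z_t|$. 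Dividing by $\sqrt N$ and inserting the bound from $\Omega^{(N)}$ then gives
\begin{align*}
|f_t(z) - g_t(z)| = |\W_2(z)|
&\le N^{\eta/2}\,\frac{|z|^2}{\sqrt N\,(1-|z|)}
= \frac{1}{N^{\eta/2}}\cdot\frac{N^{\eta}|z|^2}{\sqrt N\,(1-|z|)}\\
&< \frac{1}{N^{\eta/2}}\cdot\frac{|\om_1^{(N)}|}{\sqrt N}\,|z-z_t|
= \frac{1}{N^{\eta/2}}\,|g_t(z)| \le |g_t(z)|,
\end{align*}
which is the asserted strict inequality (and incidentally shows $g_t \neq 0$ on $\mathscr{R}_{t,\eta} \cap \D_\delta$). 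Since $t$ was arbitrary, the inequality holds for all $t \in (-1,1)$ on the same event $\Omega^{(N)}$.

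I do not expect a genuine obstacle in this argument: it is essentially a short bookkeeping exercise. The one point worth keeping in mind is that the uniformity over $t \in (-1,1)$ is automatic, because all the randomness sits in the $t$-independent estimate for $\W_2$, while the $t$-dependence is confined to the domain $\mathscr{R}_{t,\eta}$, which has been tailored precisely so as to absorb that estimate; intersecting with $\D_\delta$ only serves to keep $1-|z|$ bounded away from $0$ so that Assumption \ref{smart_ass} applies.
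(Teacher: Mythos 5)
Your proposal is correct and follows essentially the same approach as the paper: identify $f_t-g_t=\W_2$, invoke Assumption \ref{smart_ass} with an exponent $\eps<\eta$, and use the defining inequality of $\mathscr{R}_{t,\eta}$ to absorb the estimate into $|g_t|=\tfrac{|\om_1^{(N)}|}{\sqrt N}|z-z_t|$. Your explicit remark that the overwhelming-probability event depends only on $\W_2$ and not on $t$, so uniformity over $t$ is automatic, is a worthwhile clarification left implicit in the paper's proof.
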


Note that \eqref{Rouche_domain} defines a random domain, depending on $\om_1^{(N)}$, and that Proposition \ref{prop_Rouche_domain_R} only relies on Assumption \ref{smart_ass}.

The proof proceeds by direct computation, unpacking the definitions of the objects and using the relevant assumptions.
\begin{proof} 
For any $\eta> \epsilon >0$ and $z \in \mathscr{R}_{t,\eta}$ the following holds:
\begin{align*}
    |f_t (z) - g_t(z)| & = |\W_2(z)| & \text{(by definition of $f_t, g_t, \W_2$)} \\
    & < \frac{N^{\epsilon} |z|^2}{\sqrt{N} (1-|z|)} & \text{(w.o.p., by Assumption \ref{smart_ass})} \\
    & < N^{\epsilon - \eta} \frac{|\om_1^{(N)}|}{\sqrt{N}} \left| z-z_t \right| & \text{(by definition of $\mathscr{R}_{t,\eta}^{\omega_1}$)} \\
    & = N^{\epsilon - \eta} \left| g_t(z) \right| & \text{(by definition of $g_t$)}
\end{align*}
which yields the expected inequality as $\epsilon < \eta$.
\end{proof}

We can now establish more precise versions of our main result (Theorem \ref{main_theorem_intro}).
\begin{theorem}\label{main_theorem_precise_om1} Under Assumption \ref{smart_ass}, for any $\alpha > \eps > 0$ and $\delta>\frac12$, the following holds with overwhelming probability:
\begin{itemize}
    \item[(i)] For $|t| < |\om_1^{(N)}| N^{-\frac12-\alpha}$,
    exactly one eigenvalue is in $D_1$,  while all other eigenvalues are in $\D \backslash D_{2}$, where $D_1, D_2$ are defined as
    \begin{equation}\label{def_D12}
    D_1 := D\left(z_t, \rho_1 = |z_t|^2 \frac{N^{\epsilon}}{|\om_1|}\right), \qquad
    D_2 := D\left(0, \rho_2 = \left(1+\frac{N^{\epsilon}}{|\om_1|} \right)^{\!\! -1}\right).
    \end{equation}
    \item[(ii)] If $|\om_1^{(N)}| < \frac12 N^{\frac12}$, then for $|t| \in ( 2 |\om_1^{(N)}| N^{-\frac12},1) $, all eigenvalues are in $\D \backslash (D_{3} \cap \D_{\delta})$, where
    \begin{equation}\label{def_D12}
    D_3:=D\left(0,\rho_3 = \left( 1 + \frac{ N^{\eps}}{\left|\om_1\right| |z_t|}\right)^{\!\!-1} \right).
    \end{equation}
\end{itemize}
\end{theorem}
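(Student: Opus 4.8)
The plan is to deduce both statements from Proposition~\ref{prop_Rouche_domain_R} via two elementary remarks about the functions $f_t,g_t$ of \eqref{functions_ft_gt}. First, $g_t$ vanishes only at $z_t$, and $z_t\notin\mathscr{R}_{t,\eta}$ because the defining inequality of \eqref{Rouche_domain} fails there; hence the bound $|f_t-g_t|<|g_t|$ forces $f_t\neq0$ on all of $\mathscr{R}_{t,\eta}\cap\D_\delta$, so by Lemma~\ref{lem-trajectories} no eigenvalue of $G(t)$ lies in $\mathscr{R}_{t,\eta}\cap\D_\delta$. Second, by Rouché's theorem, on any subdomain $\Omega$ with $\partial\Omega\subset\mathscr{R}_{t,\eta}\cap\D_\delta$ the function $f_t$ has exactly as many zeros (with multiplicity) as $g_t$ inside $\Omega$, i.e.\ one if $z_t\in\Omega$ and none otherwise. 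I would run the whole argument deterministically in $\om_1^{(N)}$ (using only $|\om_1^{(N)}|\leq\sqrt N$) on the overwhelming-probability event of Proposition~\ref{prop_Rouche_domain_R}, for a value $\eta\in(0,\eps)$ chosen small enough in terms of $\alpha$ and $\eps$; I would also use that all $N$ eigenvalues of $G(t)$ lie in $\D$ for $t\in(-1,1)\setminus\{0\}$, an eigenvalue on $\partial\D$ being possible only if $v$ is orthogonal to an eigenvector of $U$, which has probability zero under the simplicity assumption.

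For part~(ii), the first step is to observe that the range $|t|\in(2|\om_1^{(N)}|N^{-1/2},1)$ forces $|z_t|=\frac{\sqrt N}{|\om_1^{(N)}|}\frac{|t|}{1-|t|}>2$, so that the predicted outlier location lies outside $\overline{\D}$. It then suffices to prove the deterministic inclusion $D_3\cap\D_\delta\subseteq\mathscr{R}_{t,\eta}$; indeed the conclusion follows from the first remark, since an eigenvalue in $\D\setminus\D_\delta$ automatically lies in $\D\setminus(D_3\cap\D_\delta)$. For $z\in D_3\cap\D_\delta$ one has $|z|<\rho_3$, hence $|z-z_t|\geq|z_t|-\rho_3\geq\tfrac12|z_t|$ as $|z_t|>2>\rho_3$; and, using that $x\mapsto x^2/(1-x)$ increases on $[0,1)$ together with the identity $\frac{\rho_3^2}{1-\rho_3}=\frac{(|\om_1^{(N)}||z_t|)^2}{N^\eps(|\om_1^{(N)}||z_t|+N^\eps)}\leq\frac{|\om_1^{(N)}||z_t|}{N^\eps}$, one obtains $\frac{N^\eta|z|^2}{1-|z|}\leq N^{\eta-\eps}|\om_1^{(N)}||z_t|<\tfrac12|\om_1^{(N)}||z_t|\leq|\om_1^{(N)}||z-z_t|$ for $\eta<\eps$ and $N$ large, which is exactly $z\in\mathscr{R}_{t,\eta}$.

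For part~(i), the constraint $|t|<|\om_1^{(N)}|N^{-1/2-\alpha}$ gives $|z_t|\leq2N^{-\alpha}$; combining this with $|\om_1^{(N)}|\leq\sqrt N$ and $\alpha>\eps$, I would first check the purely geometric facts that $\rho_1=|z_t|^2N^\eps/|\om_1^{(N)}|$ is much smaller than $|z_t|$, that $z_t$ lies well inside $D_2$, and that $D_1\subseteq D_2$ with $\overline{D_2}\subset\D_\delta$ (it is here that the hypothesis $\delta>\tfrac12$ is used, to absorb the case $|\om_1^{(N)}|\approx\sqrt N$ in which $\rho_2$ is close to $1$). Next I would verify the strict inequality of \eqref{Rouche_domain} on $\partial D_1$ (where $|z-z_t|=\rho_1$, so $|\om_1^{(N)}||z-z_t|=N^\eps|z_t|^2$ while $\frac{N^\eta|z|^2}{1-|z|}\leq 8N^\eta|z_t|^2<N^\eps|z_t|^2$) and on $\partial D_2$ (where $|z|=\rho_2$, $|z-z_t|\geq\rho_2-|z_t|\geq\tfrac12\rho_2$, and $\frac{N^\eta\rho_2^2}{1-\rho_2}<\tfrac12|\om_1^{(N)}|\rho_2$ by the same algebra as in part~(ii)). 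Applying Rouché on $\partial D_1$ and on $\partial D_2$ then shows that $f_t$ has exactly one zero inside each; since $D_1\subseteq D_2$ these coincide, so exactly one eigenvalue lies in $D_1$, and as $G(t)$ has $N$ eigenvalues in $\D$ the remaining $N-1$ lie in $\D\setminus D_2$.

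The hard part is not the complex-analytic input---once Proposition~\ref{prop_Rouche_domain_R} is in hand, the facts that $\mathscr{R}_{t,\eta}$ contains no eigenvalue and that Rouché transfers the zero count from the explicit affine map $g_t$ to $\W-\tfrac{1}{1-t}$ are immediate---but the geometric bookkeeping: verifying, uniformly over the stated $t$-ranges and over all admissible values of $|\om_1^{(N)}|$, that the circles $\partial D_1,\partial D_2$ and the disc $D_3\cap\D_\delta$ fall inside the random domain $\mathscr{R}_{t,\eta}$. This comes down to tracking the competition between the three scales $|z_t|$, $1-|z|$, and $N^\eta/|\om_1^{(N)}|$ and calibrating $\eta$ against $\alpha$ and $\eps$, with the edge region $\D\setminus\D_\delta$ and the condition $\delta>\tfrac12$ serving precisely to keep every relevant circle strictly inside the disc on which Assumption~\ref{smart_ass} is invoked.
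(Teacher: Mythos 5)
Your proof follows essentially the same route as the paper's: Proposition~\ref{prop_Rouche_domain_R} supplies the comparison inequality $|f_t-g_t|<|g_t|$ on $\mathscr{R}_{t,\eta}\cap\D_\delta$, so the task reduces to showing that $\partial D_1,\partial D_2$ (for~(i)) and $D_3\cap\D_\delta$ (for~(ii)) lie in the Rouché domain and then extracting the eigenvalue count, and your geometric bookkeeping (including the role of $\delta>\tfrac12$ and the calibration $\eta<\eps$) lines up with the paper's. Your treatment of part~(ii) is in fact slightly tighter: the paper verifies $\partial D_3\subset\mathscr{R}_{t,\eta}$ and then invokes Rouché, but since $\rho_3$ can be arbitrarily close to $1$ (as $t\to1$), $D_3\not\subset\D_\delta$ is possible and the Rouché inequality from Proposition~\ref{prop_Rouche_domain_R} is not a priori available on all of $\partial D_3$; your observation that $g_t$ vanishes only at $z_t\notin\mathscr{R}_{t,\eta}\cap\D_\delta$, so that the pointwise bound $|f_t-g_t|<|g_t|$ already forces $f_t\neq0$ throughout $D_3\cap\D_\delta\subset\mathscr{R}_{t,\eta}\cap\D_\delta$, dispenses with the contour and cleanly covers that case. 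One caveat: your justification that no eigenvalue sits on $\partial\D$ leans on Assumption~\ref{bad_ass}, which is not a hypothesis of this theorem; this is a rough edge you share with the paper (which only records $\Sp(G(t))\subset\overline\D$ and tacitly treats $\D$ and $\overline\D$ interchangeably), not a gap you introduced. Likewise, the ``purely geometric facts'' in part~(i) ($\rho_1\ll|z_t|$, $D_1\subset D_2$) cannot be obtained from $|\om_1^{(N)}|\leq\sqrt N$ alone when $|\om_1^{(N)}|$ is very small; the paper's own bound $|z_t|^2/|\om_1^{(N)}|<N^{-\frac12-2\alpha}$ has the same issue, so this too is a shared imprecision rather than something peculiar to your write-up.
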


The proof proceeds by a straightforward inspection of the Rouché domain $\mathscr{R}_{t,\eta}$ (with $\eta < \eps$) in the different regimes.

\begin{proof}
For $|t| < |\om_1^{(N)}| N^{-\frac12-\alpha}$, by definition of $z_t$ and using the deterministic bound \eqref{om1_deterministic_bound}, we find
\begin{equation}
|z_t| < N^{-\alpha}, 
\qquad  \frac{|z_t|^2}{|\om_1^{(N)}|} < N^{-\frac12 - 2 \alpha}, 
\qquad |z_t|+\rho_1 < \rho_2 < 1- N^{-\delta},
\end{equation}
so that in particular $D_1 \subset D_2 \subset \D_{\delta}$.
We now check that $\partial D_1, \partial D_2$ are in the domain $\mathscr{R}_{t,\eta}$ with any $\eta < \epsilon$.
\begin{enumerate}
\item For $z \in \partial D_1$, by inspection $1-|z| \asymp 1$, $|z|\asymp |z_t|$. Hence, the left-hand side of the inequality in \eqref{Rouche_domain} is of order $|z_t|^2 N^{\eta}$, while the right-hand side is 
$$|\om_1| |z-z_t| = |z_t|^2 N^{\epsilon} $$
by definition of $D_1$. The inequality follows as we chose $\eta < \eps$. By Rouché's theorem, $D_1$ contains a unique eigenvalue.

\item For $z \in \partial D_2$, the left-hand side of the inequality in~\eqref{Rouche_domain} is $ {N^{\eta} \rho_2^2}{(1-\rho_2)^{-1}} $ and by inspection the right-hand side is of order $|\om_1| \rho_2$, so that the inequality holds if
$$
\frac{ \rho_2}{1-\rho_2} = |\om_1| N^{-\eps}  \ll |\om_1| N^{- \eta}
$$
which, again, is true as $\eta < \epsilon$. By Rouché's theorem, $D_2$ contains a unique eigenvalue, which was already isolated (in $D_1$ by the above argument), so that all other eigenvalues are in $\D \backslash D_2$.
\end{enumerate}

Assuming $|\om_1^{(N)}| < \frac12 N^{\frac12}$, and $|t| > 2|\om_1^{(N)}| N^{-\frac12}$, by definition of $z_t$, we have $|z_t| > 2$, so that $|z-z_t| \asymp |z_t|$. Consider $z \in \partial D_3$. As $\rho_3 <1$, we have $|z|^2< |z|$, and so
$$
\frac{|z|^2}{1-|z|} < \frac{|z|}{1-|z|} = \frac{\rho_3}{1-\rho_3} = N^{-\eps} |\om_1| |z_t|
< N^{-\eta} |\om_1| |z_t|
$$
as $\eta < \eps$, proving that $\partial D_3$ is in the domain $\mathscr{R}_{t,\eta}$. As $s_1(z) - \frac{1}{1-t}$ does not vanish inside $D_3$, we conclude by Rouché's theorem that all eigenvalues are outside $D_3 \cap \D_\delta$.
\end{proof}

Under Assumption~\ref{kick_ass}, the dependence on $\om_1^{(N)}$ can be removed from the domains considered. Note the result now only holds with high probability.

\begin{theorem}\label{main_theorem_precise_no_om1} Under Assumptions \ref{smart_ass} and \ref{kick_ass},
for any $\alpha > \eps>0$, the following holds with high probability:
\begin{itemize}
    \item[(i)] For $|t|<N^{-\frac12-\alpha}$,
    exactly one eigenvalue is in $ D\left(z_t, |z_t|^2 N^{\eps}\right)$,  while all other eigenvalues are in $\D \backslash D\left(0, N^{-\eps}\right)$.
    \item[(ii)] For $|t| > N^{-\frac12+\alpha} $, all eigenvalues are in $\D \backslash D\left(0,1- |z_t|^{-1} N^{\eps}\right)$.
\end{itemize}
\end{theorem}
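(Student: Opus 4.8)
The plan is to deduce the statement from Theorem~\ref{main_theorem_precise_om1} by conditioning on the high-probability event furnished by Assumption~\ref{kick_ass}. Fix auxiliary exponents $\eps_0,\eps_1>0$ chosen small enough that $\eps_0+\eps_1<\eps$ (in the end $\eps_0=\eps_1=\eps/4$ will do), and set $\Omega_N:=\{N^{-\eps_1}<|\om_1^{(N)}|<N^{\eps_1}\}$, so that $\PP(\Omega_N)\to1$. Working on $\Omega_N$ intersected with the overwhelming-probability events supplied by Theorem~\ref{main_theorem_precise_om1} (applied with regularity exponent $\eps_0$), every appearance of $|\om_1^{(N)}|$ — in the time thresholds, in $|z_t|$, and in the radii $\rho_1,\rho_2,\rho_3$ — is controlled up to a factor $N^{\pm\eps_1}$, and the whole point is that such factors may be absorbed into the arbitrary exponent $\eps$.

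For part (i): the hypothesis $|t|<N^{-\frac12-\alpha}$ gives $|t|<|\om_1^{(N)}|\,N^{-\frac12-\alpha'}$ on $\Omega_N$ with $\alpha':=\alpha-\eps_1>\eps_0$ (since $\eps_0+\eps_1<\eps<\alpha$), so Theorem~\ref{main_theorem_precise_om1}(i) applies. I would then check the two elementary inclusions $D_1\subseteq D(z_t,|z_t|^2N^{\eps})$ (because $N^{\eps_0}/|\om_1^{(N)}|\le N^{\eps_0+\eps_1}\le N^{\eps}$) and $D(0,N^{-\eps})\subseteq D_2$ (because $\rho_2=(1+N^{\eps_0}/|\om_1^{(N)}|)^{-1}\ge(1+N^{\eps_0+\eps_1})^{-1}\ge N^{-\eps}$ for large $N$), which immediately place the $N-1$ non-outlier eigenvalues in $\D\backslash D(0,N^{-\eps})$. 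It remains to see that the enlarged disc $D(z_t,|z_t|^2N^{\eps})$ still contains exactly one eigenvalue; for this I would verify $D(z_t,|z_t|^2N^{\eps})\subseteq D_2$ via the crude bound $|z_t|+|z_t|^2N^{\eps}\ll\rho_2$, using $|z_t|\le2N^{\eps_1-\alpha}$ in this regime, so that the disc is sandwiched between $D_1$ and $D_2$, each containing exactly one eigenvalue.

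For part (ii): on $\Omega_N$ one has $|\om_1^{(N)}|\le N^{\eps_1}<\frac12N^{\frac12}$ and, since $\alpha>\eps_1$, the hypothesis $|t|>N^{-\frac12+\alpha}$ gives $|t|>2|\om_1^{(N)}|N^{-\frac12}$ for $N$ large, so Theorem~\ref{main_theorem_precise_om1}(ii) applies. From $(1+c)^{-1}\ge1-c$ with $c=N^{\eps_0}/(|\om_1^{(N)}||z_t|)\le N^{\eps_0+\eps_1}|z_t|^{-1}\le N^{\eps}|z_t|^{-1}$ I obtain $D(0,1-N^{\eps}|z_t|^{-1})\subseteq D_3$, which is exactly the claimed radius. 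Were it not for the truncation by $\D_\delta$, this would finish the argument.

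The hard part will be reconciling that truncation with the sharp, $t$-dependent radius $1-N^{\eps}|z_t|^{-1}$: since $|z_t|=\frac{\sqrt N}{|\om_1^{(N)}|}\frac{|t|}{|1-t|}\to\infty$ as $|t|\to1$, the disc $D_3$ swells towards $\D$, and for $|t|$ within roughly $N^{\frac12-\delta}$ of $\pm1$ one has $\rho_3>1-N^{-\delta}$, so that $D_3\cap\D_\delta$ collapses to $\D_\delta$ and a \emph{fixed} $\delta$ only yields $|\lambda_j(t)|\ge1-N^{-\delta}$, which is too weak. The way I would get around this is to exploit that Theorem~\ref{main_theorem_precise_om1} holds for \emph{every} $\delta>\frac12$: for any fixed $t$ one has $|z_t|<\infty$, so taking $\delta$ large enough (depending on $t$ and $N$) returns one to the favourable case $\rho_3\le1-N^{-\delta}$, where $D_3\cap\D_\delta=D_3$ and the inclusion above applies. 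To promote this to a single high-probability event uniform in $t$ I would take a union bound over $\delta\in\{k\in\N:k>\frac12\}$ — harmless because the overwhelming-probability bounds behind Theorem~\ref{main_theorem_precise_om1} stem from the stochastic domination in Assumption~\ref{smart_ass}, which is uniform in $\delta$ — or, equivalently, fix $t$ first, choose $\delta=\delta(t,N)$, and arrange the resulting events so that their intersection still has probability tending to $1$. The remaining bookkeeping (the choice of $\eps_0,\eps_1$ and the disc inclusions above) is routine.
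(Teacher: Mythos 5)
Your overall strategy---condition on the high-probability event $\Omega_N$ supplied by Assumption~\ref{kick_ass}, apply Theorem~\ref{main_theorem_precise_om1} with small auxiliary exponents $\eps_0,\eps_1$, and absorb the $|\om_1^{(N)}|$-dependence of the discs into the arbitrary exponent $\eps$ via elementary inclusions---is exactly what the paper's terse proof has in mind, and your treatment of part (i), including the sandwich $D_1\subseteq D(z_t,|z_t|^2N^\eps)\subseteq D_2$, is correct.

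The gap is in your resolution of the $\D_\delta$-truncation in part (ii). You correctly observe that for a fixed $\delta$, Theorem~\ref{main_theorem_precise_om1}(ii) only excludes $D_3\cap\D_\delta$, which degenerates to $\D_\delta$ once $|z_t|\gtrsim N^{\delta}$, i.e.\ once $1-|t|\lesssim N^{1/2-\delta}$; so no fixed $\delta$ covers all $|t|<1$. Your proposed fix---a union bound over $\delta\in\N$, or equivalently $\delta=\delta(t,N)$---rests on the claim that this is ``harmless because the overwhelming-probability bounds stem from the stochastic domination in Assumption~\ref{smart_ass}, which is uniform in $\delta$.'' But that is not what Assumption~\ref{smart_ass} says: it is a \emph{family} of stochastic-domination statements, one for each fixed $\delta>0$, and nothing guarantees that the implied ``$N$ large enough'' thresholds are uniform in $\delta$. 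In the one case where the assumption is actually verified (Theorem~\ref{thm:assumption_holds}, the CUE case), the proof uses a lattice of $\simeq N^\ell$ points with $\ell>2\delta$ and a derivative bound of order $N^{\delta}/(1-|z|)$, so the threshold genuinely grows with $\delta$, and a union bound over unboundedly many values of $\delta$ does not go through as stated. As written, your argument therefore establishes part (ii) only for $|t|\le 1-N^{-K}$ with $K$ fixed (taking $\delta$ large in terms of $K,\alpha,\eps$), not for all $|t|\in(N^{-1/2+\alpha},1)$; closing the remaining sliver near $t=\pm1$ requires an additional input, e.g.\ a quantitative $\delta$-dependence in Assumption~\ref{smart_ass} or a separate deterministic argument near the endpoints.
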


\begin{proof}
This follows from Theorem \ref{main_theorem_precise_om1} by choosing the parameters carefully: $\delta>0$ from Assumption \ref{smart_ass} large enough and $\eps>0$ from Assumption \ref{kick_ass} small enough.
\end{proof}

\section{Particulars of the CUE case}\label{sec_CUE}
Of particular interest is the situation where $U$ is a Haar-distributed Unitary matrix -- the so-called Circular Unitary Ensemble (CUE). This is the case considered in particular by the papers \cites{Fyodorov2001,Fyodorov_CUE, FyodorovSommers2003, ForresterIpsen,Forrester_Review}. We first prove that our general regularity assumption holds in this case; we then rely on the integrability of the model to give an argument as to the optimality of the main theorem.

\subsection{Proof of the regularity assumption for CUE}
This section is devoted to the proof of the following theorem, which establishes that our main results (Theorems \ref{main_theorem_intro},\ref{main_theorem_precise_om1},\ref{main_theorem_precise_no_om1}) apply in particular to a CUE matrix.
\begin{theorem}\label{thm:assumption_holds}
Assumptions \ref{smart_ass} and \ref{kick_ass} hold for $U$ a $\CUE(N)$ matrix and $v$ a unit vector independent of $U$. More precisely, if $U$ is an $N \times N$ Haar-distributed unitary matrix, then
\begin{equation}\label{assum_2.1_CUE}
    \om_1^{(N)} := \sqrt{N} v^* U^* v 
    \disteq \sqrt{N} \beta_{1,N} \asymp 1,
\end{equation}
and for any $\delta>0$, the stochastic domination
\begin{equation}\label{assum_2.2_CUE}
    \W_2(z) := v^* \frac{(zU^*)^2}{I_N-zU^*} v \prec \frac{|z|^2}{ \sqrt{N} (1-|z|)}
\end{equation}
holds uniformly in $z$ with $1-|z| > N^{-\delta}$.
\end{theorem}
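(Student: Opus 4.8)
The plan is to prove the two claims of Theorem~\ref{thm:assumption_holds} separately, exploiting the invariance of the Haar measure to reduce everything to computations with a single fixed vector. For \eqref{assum_2.1_CUE}, I would first use the bi-invariance of the Haar measure: since $U$ is Haar-distributed and $v$ is independent of $U$, conditioning on $v$ and applying a unitary change of basis sending $v$ to $e_1$ shows that $v^* U^* v \disteq (U^*)_{11} = \overline{U_{11}}$. The distribution of a single diagonal entry of a CUE matrix is classical: $|U_{11}|^2 \disteq \beta_{1,N-1}$, a Beta random variable with parameters $1$ and $N-1$, so that $U_{11}$ is uniformly distributed on a disk of the appropriate radius, and in particular $\sqrt{N}\,U_{11}$ converges in distribution to a standard complex Gaussian. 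From this it is immediate that $N^{-\eps} < |\om_1^{(N)}| < N^{\eps}$ with high probability (indeed with overwhelming probability, since the density near $0$ is bounded and the tail decays), which is exactly Assumption~\ref{kick_ass}.

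For \eqref{assum_2.2_CUE}, the natural approach is to fix $z$ (using again the rotational invariance to take $v = e_1$) and compute the first two moments of $\W_2(z)$, then upgrade to a high-probability bound via concentration or a moment method, and finally take a union bound over a fine net in $z$ together with a deterministic Lipschitz estimate to make the bound uniform on $\D_\delta$. For the mean, $\E[\W_2(z)] = \sum_{k\ge 2} z^k\, \E[(U^*)^k_{11}]$, and one can compute these expectations either from the known moments of entries of powers of a CUE matrix, or — more robustly — by writing $\W_2(z) = \W(z) - s_1(z)$ and using known formulas for $\E[v^*(I-zU^*)^{-1}v]$ for CUE. One expects $\E[\W_2(z)]$ to be of size $|z|^2/N$ (one order of $1/\sqrt N$ from centering the linear term and another from the fact that off-diagonal-type contributions average out), hence of the same order as the claimed bound times $1/\sqrt{N}(1-|z|)$ only when $1-|z|\asymp 1$; the factor $(1-|z|)^{-1}$ is really needed near the edge and comes from the variance. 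For the variance, $\var(\W_2(z))$ should be of order $|z|^4 / (N^2(1-|z|^2))$ or similar — I would compute $\E[|\W_2(z)|^2]$ using Weingarten calculus (the relevant products involve at most second-order Weingarten functions, so the combinatorics is manageable) and check that it matches $\E[\W_2]^2$ up to a term of the stated order. Chebyshev then gives $|\W_2(z)| \prec |z|^2/(\sqrt N (1-|z|))$ pointwise, and I would boost the exponent from polynomial to overwhelming probability either by computing higher moments (the $p$-th moment should scale like the $p/2$ power of the variance, as for a sum of weakly-dependent terms) or by a direct concentration argument for Lipschitz functions on the unitary group applied to $U \mapsto \W_2(z)$ after truncating the resolvent at scale $1-|z|$.

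Finally, for uniformity: on $\D_\delta = D(0, 1-N^{-\delta})$ the function $z\mapsto \W_2(z)$ is holomorphic with derivative controlled by $\|(I-zU^*)^{-1}\|^2 \lesssim N^{2\delta}$, so it is Lipschitz with a polynomial constant; choosing a net of $z$-values of spacing $N^{-C}$ for $C$ large enough (depending on $\delta$ and the desired $\eps$) and union-bounding the overwhelming-probability pointwise estimate over the $\mathrm{poly}(N)$ net points yields the uniform statement. The main obstacle I anticipate is the variance computation near the spectral edge: getting the $(1-|z|)^{-1}$ scaling exactly right (rather than, say, $(1-|z|)^{-2}$) requires being careful about how the Weingarten-calculus cancellations interact with the geometric series $\sum_k z^k (U^*)^k_{11}$, and in particular using the unitarity of $U$ to control $\sum_k |z|^{2k} = (1-|z|^2)^{-1}$ rather than naively bounding each term. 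An alternative route that sidesteps some of this is to prove \eqref{assum_2.2_CUE} directly from the already-known isotropic local law / deterministic-equivalent estimates for $v^*(I-zU^*)^{-1}v$ in the CUE case (as alluded to in the appendix discussion), if such estimates are available in the literature in the required uniform form.
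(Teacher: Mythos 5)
Your overall strategy matches the paper's: reduce to $v=e_1$ by rotational invariance, prove a pointwise stochastic domination for $\W_2(z)$ by a moment method, then boost to a uniform bound over $\D_\delta$ via a union bound over a polynomially-fine net plus a deterministic Lipschitz estimate for the holomorphic function $\W_2$. The $\om_1$ part is also the same (and incidentally, the first column of a Haar unitary is uniform on the sphere in $\C^N$, so $|U_{11}|^2 \disteq \beta_{1,N-1}$ as you write; the $\beta_{1,N}$ in the statement is a minor misprint and does not affect the conclusion $|\om_1|\asymp 1$).

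Where the proposal is genuinely incomplete, however, is precisely the step you acknowledge as the main obstacle: going from the variance computation to an arbitrary-moment bound with the correct $(1-|z|)$-dependence. Saying ``the $p$-th moment should scale like the $p/2$ power of the variance, as for a sum of weakly-dependent terms'' is not a proof, and it is in fact not quite true here: the paper's Proposition on $\E|\W_2|^{2M}$ gives a bound of order $|z|^{4M}N^{-M}(1-|z|)^{1-2M}$, which is \emph{worse} than $(\var\W_2)^M\asymp |z|^{4M}N^{-M}(1-|z|)^{-M}$ by a factor $(1-|z|)^{1-M}$; the Markov argument still closes because of the extra factor of $(1-|z|)$ that cancels, but one cannot invoke a generic ``weak dependence'' heuristic. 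The paper instead organizes the computation through the spectral decomposition $\W_2(z)=\sum_k|r_{k1}|^2\sum_{\ell\ge 2}z^\ell e^{-i\ell\theta_k}$, exploiting the independence of eigenvalues and eigenvectors of a Haar unitary; the eigenvector factor is controlled by a uniform Weingarten bound (Collins--Matsumoto), the eigenvalue factor by a determinantal-correlation estimate ($|\E e^{i\sum a_l\theta_{k_l}}|<m!\,N^{-m+1}$ for distinct indices with nonzero coefficients, plus a combinatorial lemma to track cancellations), and the resulting series in $|z|^2$ is identified as ${}_2F_1(M,M;1;|z|^2)$ and evaluated via Euler's transformation. Your suggestion to do Weingarten calculus directly on $\E[(U^*)_{11}^{\ell_1}\overline{(U^*)_{11}^{\ell_2}}\cdots]$ is in principle viable but considerably messier, since the permutation group sizes grow with the $\ell_j$'s rather than staying fixed at $2M$.

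Your two proposed alternatives are also problematic. The concentration-of-measure route runs into trouble near the edge: the map $U\mapsto\W_2(z)$ has Lipschitz constant of order $(1-|z|)^{-2}$, so the Gaussian concentration rate degrades to $\exp(-cN^{1+2\eta}|z|^4(1-|z|)^2/(1-|z|)^{-2}\cdot(1/N))\sim\exp(-cN^{2\eta}(1-|z|)^2)$ at the target scale $\epsilon\sim N^{\eta-1/2}|z|^2(1-|z|)^{-1}$, which is \emph{not} overwhelming when $1-|z|\sim N^{-\delta}$ with $\delta>\eta$; truncating the Neumann series at scale $(1-|z|)^{-1}$ does not fix the Lipschitz constant. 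The second alternative, importing a uniform isotropic local law for CUE from the literature, is speculative: the paper's appendix only records the non-isotropic (trace) bound \eqref{noniso_law} via Johansson's lemma, and the isotropic version is exactly what this theorem establishes. So the moment method with the full combinatorial lemmas is not a detail you can skip; it is the proof.
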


By unitary invariance of CUE matrices, and $v$ being independent from $U$, we can assume $v=e_1$ in all proofs below. Also, throughout the Section, we denote the eigenvalues of $U$ by $\re^{\ri \theta_1}, \dots, \re^{\ri \theta_N}$ and the corresponding eigenvectors by $r_1, \dots, r_N$.

To illustrate the method, we first give a precise estimation of the variance of $\W_2(z)$.

\begin{proposition}\label{exact_var}
For any $z \in \D, N \geq 1$,
\begin{equation}\label{mean_and_var}
\E \W_2(z) = 0, \qquad
\var \W_2 (z) \asymp \frac{|z|^4}{ N (1-|z|)},
\end{equation}
where $\asymp$ means that it is bounded above and below, up to an absolute multiplicative constant.
\end{proposition}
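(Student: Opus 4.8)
The plan is to diagonalize $U$ and reduce the computation of both moments to spectral sums over the eigenvalues $\re^{\ri\theta_j}$ and eigenvector overlaps $|\langle r_j|e_1\rangle|^2$, which for a CUE matrix are independent of the eigenangles and jointly Dirichlet-distributed. Writing $U^* = \sum_j \re^{-\ri\theta_j} r_j r_j^*$, one gets
\begin{displaymath}
\W_2(z) = \sum_{j=1}^N |\langle r_j|e_1\rangle|^2 \,\frac{z^2 \re^{-2\ri\theta_j}}{1-z\re^{-\ri\theta_j}}
= \sum_{j=1}^N p_j\, h(z\re^{-\ri\theta_j}),
\qquad h(w) := \frac{w^2}{1-w},
\end{displaymath}
where $p_j := |\langle r_j|e_1\rangle|^2$. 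For the mean, I would condition on the eigenangles: by exchangeability $\E[p_j] = 1/N$, so $\E \W_2(z) = \frac1N \E\big[\sum_j h(z\re^{-\ri\theta_j})\big]$; then I would use the rotational invariance of the CUE spectrum (each $\re^{\ri\theta_j}$ is marginally uniform on the circle) together with $\frac{1}{2\pi}\int_0^{2\pi} h(z\re^{-\ri\theta})\,\dd\theta = 0$ for $|z|<1$ (since $h(w) = \sum_{k\geq 2} w^k$ has no constant term, and integrating kills every monomial $z^k\re^{-\ri k\theta}$ with $k\geq1$). This gives $\E\W_2(z)=0$.

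For the variance, $\var\W_2(z) = \E|\W_2(z)|^2 = \E\big[\sum_{j,k} p_j p_k\, h(z\re^{-\ri\theta_j})\overline{h(z\re^{-\ri\theta_k})}\big]$. I would split into diagonal ($j=k$) and off-diagonal ($j\neq k$) terms and again condition on the eigenangles, using the Dirichlet moments $\E[p_j^2] = \frac{2}{N(N+1)}$ and $\E[p_jp_k] = \frac{1}{N(N+1)}$ for $j\neq k$. The off-diagonal angular average, after expanding $h$ and $\bar h$ into power series, only retains terms where both angular frequencies match, producing (using the sine-kernel structure of the CUE two-point function, or more simply the known formula $\E[\re^{\ri m\theta_j}\re^{-\ri m\theta_k}] = -\frac{1}{N(N-1)}\min(|m|,N)$ for $j\neq k$) a contribution that is lower order. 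The dominant term is the diagonal one: $\frac{2}{N(N+1)} \E\big[\sum_j |h(z\re^{-\ri\theta_j})|^2\big] = \frac{2}{N+1} \cdot \frac{1}{2\pi}\int_0^{2\pi} |h(z\re^{-\ri\theta})|^2\,\dd\theta$. The remaining task is to show $\frac{1}{2\pi}\int_0^{2\pi} |h(z\re^{-\ri\theta})|^2\,\dd\theta \asymp \frac{|z|^4}{1-|z|}$, which follows from Parseval: the integral equals $\sum_{k\geq 2} |z|^{2k} = \frac{|z|^4}{1-|z|^2} \asymp \frac{|z|^4}{1-|z|}$. Combining, $\var\W_2(z) \asymp \frac{1}{N}\cdot\frac{|z|^4}{1-|z|}$, as claimed.

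The main obstacle is controlling the off-diagonal sum and showing it does not spoil the lower bound: one must verify that the negative correlations between distinct CUE eigenangles contribute a term of strictly smaller order than $\frac{|z|^4}{N(1-|z|)}$ uniformly in $|z|<1$, rather than merely an upper bound. I expect this to require a careful bookkeeping of the series $\sum_{m\geq 2} \frac{\min(m,N)}{N(N-1)} c_m \bar c_{m'}|z|^{\cdots}$ arising from the eigenvalue correlation kernel, showing it is $\OO\big(\frac{1}{N^2}\cdot\frac{|z|^4}{1-|z|}\big)$ or similar; the lower bound on the variance then comes cleanly from the diagonal term alone, while the upper bound needs both pieces. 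An alternative, cleaner route for the off-diagonal control is to use the determinantal structure directly: write $\E\big[\sum_{j\neq k} p_jp_k\, h_j\bar h_k\big]$ in terms of the CUE kernel $K_N(\theta,\theta') = \frac{1}{2\pi}\sum_{\ell=0}^{N-1}\re^{\ri\ell(\theta-\theta')}$ and estimate $\big|\iint h(z\re^{-\ri\theta})\overline{h(z\re^{-\ri\theta'})} K_N(\theta,\theta')^2 \,\dd\theta\,\dd\theta'\big|$ against the diagonal, which keeps all constants explicit.
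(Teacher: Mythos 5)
Your approach is the same as the paper's: spectral decomposition, independence of eigenvalues and eigenvector overlaps, Dirichlet moments $\E[p_j^2]=\tfrac{2}{N(N+1)}$ and $\E[p_jp_k]=\tfrac{1}{N(N+1)}$, and the identity $\E|\tr(U^\ell)|^2=\min(\ell,N)$ for the eigenangle correlations. The mean-zero argument and the diagonal computation (Parseval giving $\sum_{\ell\geq2}|z|^{2\ell}=\tfrac{|z|^4}{1-|z|^2}$) are both correct. The difficulty you flag — controlling the off-diagonal contribution so it does not spoil the lower bound — is indeed the crux. However, two concrete points need fixing.

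First, your correlation formula is wrong. From $\E|\tr(U^\ell)|^2=\min(\ell,N)$ one gets, for $j\neq k$ and $\ell\neq 0$,
\begin{displaymath}
\E\!\left[\re^{\ri\ell(\theta_j-\theta_k)}\right]=\frac{\min(\ell,N)-N}{N(N-1)}=-\frac{(N-\ell)_+}{N(N-1)},
\end{displaymath}
not $-\tfrac{\min(\ell,N)}{N(N-1)}$ as you wrote: you dropped the $-N$ coming from the $j=k$ diagonal of $|\tr(U^\ell)|^2$. For small $\ell$ the correct correlation is $\approx -\tfrac1N$, an order of magnitude larger than your version.

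Second, and as a consequence, the off-diagonal term is \emph{not} of lower order than the diagonal. Summing over the $N(N-1)$ off-diagonal pairs gives
\begin{displaymath}
\text{(off-diag)}=-\frac{1}{N(N+1)}\sum_{\ell=2}^{N}(N-\ell)\,|z|^{2\ell},
\end{displaymath}
which when $|z|\to 1$ (and $N(1-|z|)\gg1$) is of size $\asymp\tfrac{1}{N+1}\cdot\tfrac{|z|^4}{1-|z|^2}$ — the same order as the diagonal $\tfrac{2}{N+1}\cdot\tfrac{|z|^4}{1-|z|^2}$, roughly half of it. Your expectation that it is $\OO\!\big(\tfrac{1}{N^2}\cdot\tfrac{|z|^4}{1-|z|}\big)$ is therefore false, and the lower bound cannot be proved by discarding the off-diagonal as negligible. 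What rescues the argument (and is what the paper does) is the sign and the explicit comparison: the off-diagonal is negative, so the upper bound is immediate, and since $\tfrac{1}{N}\sum_{\ell=2}^N(N-\ell)|z|^{2\ell}\leq\sum_{\ell\geq2}|z|^{2\ell}$, the off-diagonal is bounded in magnitude by exactly half the diagonal, leaving $\var\W_2(z)\geq\tfrac{|z|^4}{(N+1)(1-|z|^2)}$. In fact, regrouping yields the clean identity
\begin{displaymath}
\var\W_2(z)=\frac{|z|^4}{(N+1)(1-|z|^2)}+\frac{|z|^{2N+2}}{(N+1)(1-|z|^2)}+\frac{1}{N(N+1)}\sum_{\ell=2}^{N}\ell\,|z|^{2\ell},
\end{displaymath}
all three terms non-negative, from which the two-sided bound is manifest. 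With the corrected correlation formula and this rewriting replacing the ``lower-order'' heuristic, your proof is complete and identical in substance to the paper's.
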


\begin{proof}
For any fixed $z$, the spectral decomposition yields
\begin{equation}\label{spectral_decomposition}
\W_2(z) 
= e_1^* \frac{(zU^{*})^2}{I_N-zU^*} e_1
= 
\sum_{k=1}^N  |r_{k1}|^2
\frac{z^2 \re^{-2\ri\theta_k}}{1-z\re^{-\ri\theta_k}}
=
\sum_{k=1}^N
|r_{k1}|^2
\sum_{\ell \geq 2}
z^l \re^{-\ri \ell \theta_k}
\end{equation}
The expectation being zero follows directly by unitary invariance. For the variance, we write
\begin{align*}
\var \W_2(z)
& = \E
\sum_{k_1, k_2 = 1}^N
|r_{k_1 1}|^2
|r_{k_2 1}|^2
\sum_{\ell_1, \ell_2 \geq 2}
z^{\ell_1} \overline{z}^{\ell_2} \re^{-\ri \ell_1 \theta_{k_1}}
\re^{\ri \ell_2 \theta_{k_2}} \\
& =
\sum_{k_1, k_2 = 1}^N
\E \left(
|r_{k_1 1}|^2
|r_{k_2 1}|^2
\right)
\sum_{\ell \geq 2}
|z|^{2\ell}
\E \left( \re^{-\ri \ell (\theta_{k_1}- \theta_{k_2})} \right)
\end{align*}
where we have used the independence of eigenvectors and eigenvalues to split the expectation, and unitary invariance to cancel the terms with $\ell_1 \neq \ell_2$. The expectation of the eigenvector term is obtained explicitly from Weingarten calculus, namely
\begin{equation}\label{elementary_Wg}
    \E \left(
|r_{k_1 1}|^2
|r_{k_2 1}|^2 \right)
=
\frac{1+\delta_{k_1,k_2}}{N(N+1)}.
\end{equation}
For the eigenvalues we invoke the fact (Proposition 3.11 in \cite{Meckes_book}) that 
\begin{equation}\label{Meckes_magic}
\E |\tr (U^\ell)|^2 = \min (\ell,N)    
\end{equation}
from which it follows in particular that
\begin{equation}\label{two_theta_cancel}
\E \left( \re^{\ri \ell (\theta_1 - \theta_2)} \right) 
= \frac{ \min(\ell,N) - N }{N(N-1)}
= - \frac{(N-\ell)_+}{N(N-1)}.
\end{equation}
Finally, by splitting the sum according to $k_1=k_2$ and $k_1 \neq k_2$, we deduce the following exact expression of the variance of $\W_2$
\begin{align*}
\var \W_2(z)
& =
\sum_{k = 1}^N
\frac{2}{N(N+1)}
\sum_{\ell \geq 2}
|z|^{2\ell}
-
\sum_{k_1 \neq k_2}
\frac{1}{N(N+1)}
\sum_{\ell = 2}^{N}
|z|^{2\ell}
\frac{N-\ell}{N(N-1)} \\
& = \frac{2 |z|^4}{(N+1) (1-|z|^2)} - \frac{1}{N (N+1)} \sum_{\ell =2}^N (N-\ell) |z|^{2\ell} \\
& < \frac{2 |z|^4}{(N+1) (1-|z|^2)},
\end{align*}
which proves the upper bound; but also, from the same identity,
\begin{align*}
\var \W_2(z)
& = \frac{|z|^4}{(N+1) (1-|z|^2)} + \frac{|z|^{2N+2}}{(N+1)(1-|z|^2)} + \frac{1}{N (N+1)} \sum_{\ell =2}^N \ell |z|^{2\ell} \\
& > \frac{|z|^4}{(N+1) (1-|z|^2)},
\end{align*}
which proves the lower bound on the variance.
\end{proof}

To generalize the above argument to all moments, we need the following lemma, which generalizes the overall bound of order $N^{-1}$ in \eqref{two_theta_cancel}.

\begin{lemma}\label{lemma:CUE_bound_1}
For any $m \geq 2$, distinct $k_1, \dots, k_m$ and non-zero integer coefficients $a_1, \dots, a_m$,
\begin{equation}\label{CUE_bound_1}
\left| \E \left( \re^{ \ri \sum_{l=1}^m a_l \theta_{k_l}} \right) \right| 
< m! N^{-m+1}.
\end{equation}
\end{lemma}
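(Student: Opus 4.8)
The plan is to express the expectation $\E\bigl(\re^{\ri \sum_{l=1}^m a_l \theta_{k_l}}\bigr)$ in terms of power sums of the eigenvalues and then apply the known exact formulas for moments of traces of powers of a CUE matrix. First I would observe that since the $k_1,\dots,k_m$ are distinct, the quantity $\re^{\ri\sum_l a_l\theta_{k_l}} = \prod_{l=1}^m \re^{\ri a_l\theta_{k_l}}$ is (up to the combinatorial factor coming from the number of ways to assign distinct labels) a term in the expansion of a product $\prod_l \tr(U^{a_l})$ — more precisely, by inclusion-exclusion one writes the sum over distinct indices of $\prod_l \re^{\ri a_l \theta_{k_l}}$ as a signed sum over set partitions of $\{1,\dots,m\}$ of products $\prod_{\text{blocks }B}\tr\bigl(U^{\sum_{l\in B}a_l}\bigr)$. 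By exchangeability of the eigenvalues, $\E\bigl(\re^{\ri\sum_l a_l\theta_{k_l}}\bigr)$ for one fixed choice of distinct indices equals $\frac{(N-m)!}{N!}$ times the full symmetric sum, so the target reduces to bounding $\E\bigl[\sum_{\sigma}\pm\prod_{\text{blocks}}\tr(U^{\,\cdot\,})\bigr]$ and multiplying by $(N-m)!/N! \asymp N^{-m}$ (with the factor $m!$ slack to absorb constants).

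The key analytic input is the Diaconis–Shahshahani / Diaconis–Evans result on joint moments of traces of powers: for nonzero integers $b_1,\dots,b_r$, $\E\bigl[\prod_j \tr(U^{b_j})\bigr]$ vanishes unless the $b_j$ (with multiplicity) can be paired into cancelling pairs, and when $N$ is large enough it equals the corresponding Gaussian moment, which is $O(1)$ in $N$. What one actually needs here is just a crude bound: each factor $\tr(U^b)$ has $\E|\tr(U^b)|^2 = \min(|b|,N) \le N$ by \eqref{Meckes_magic}, so by Cauchy–Schwarz (iterated, or Hölder) $|\E[\prod_j \tr(U^{b_j})]| \le N^{r/2}$; combined with the fact that the number of blocks in any partition is at most $\lfloor m/2\rfloor$ whenever the expectation is nonzero — because a singleton block $\{l\}$ contributes $\tr(U^{a_l})$ with $\E\tr(U^{a_l})=0$ for $a_l\ne 0$, and more generally an isolated power kills the expectation — one gets $|\E[\prod_{\text{blocks}}\tr]| \le N^{\lfloor m/2\rfloor} \le N^{m/2}$ for each surviving partition. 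Summing over the at most $B_m \le m!$ partitions and multiplying by $(N-m)!/N! \le N^{-m}$ (valid since $m\le N$; for $m>N$ the left side is bounded trivially and the claimed bound $m!N^{-m+1}<1$ anyway forces nothing — one handles $m\ge N$ separately or notes the statement is vacuous there) yields a bound of order $m!\,N^{-m/2}$, which is comfortably below $m!N^{-m+1}$ once $N$ is not too small; a slightly more careful accounting of which partitions survive recovers the stated $N^{-m+1}$.

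Actually, to get the sharp exponent $-m+1$ rather than $-m/2$, I would refine the partition bookkeeping: after the inclusion–exclusion, group terms by the partition $\pi$ of $\{1,\dots,m\}$; a partition with $k$ blocks contributes (up to sign) $N^{\underline{k}}/N^{\underline{m}}$ times $\E\bigl[\prod_{B\in\pi}\tr(U^{s_B})\bigr]$ where $s_B=\sum_{l\in B}a_l$ and $N^{\underline{j}} = N!/(N-j)!$. Since $N^{\underline{k}}/N^{\underline{m}} \asymp N^{k-m}$ and the trace moment is $O(1)$ (for $N$ large, with the relevant block-sums being $O(m)$ in size so the "$N$ large enough" threshold is uniform over the finitely many choices), the dominant contribution comes from the partition into blocks maximizing $k$ subject to non-vanishing of the trace moment. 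A singleton block $\{l\}$ gives $\E\tr(U^{a_l})=0$, so every block has size $\ge 2$, whence $k\le \lfloor m/2\rfloor$ — but this only gives $N^{\lfloor m/2\rfloor-m}$. To reach $N^{1-m}$ one uses instead that for a single trace $\E\tr(U^b)=0$ identically (for $b\ne0$), so the $\emph{only}$ non-vanishing single-block partition is excluded, and one checks directly that among partitions into blocks of size $\ge 2$ the resulting power $N^{k-m}$ with $k\le\lfloor m/2\rfloor \le m-1$ gives at worst $N^{-m+1}$ when $k=m-1$, which happens only for $m=2$; for $m\ge3$ one gets a strictly better power, so in all cases the bound $m!N^{-m+1}$ holds with room to spare. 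The main obstacle is purely bookkeeping: carefully justifying the inclusion–exclusion identity relating sums over distinct indices to products of power-sum traces, and ensuring the "$N$ large enough" in the Diaconis–Evans exact-moment theorem is applied only to block-sums bounded in terms of $m$ (so uniform in the $a_l$), since a priori $a_l$ could be large — but large $a_l$ only helps, as $\E|\tr(U^b)|^2=\min(|b|,N)$ is still $\le N$ regardless of $b$, and one can fall back on the Cauchy–Schwarz bound $N^{m/2}$ whenever any block-sum exceeds $N$, which is still $\le m!N^{-m+1}$-compatible after the $N^{-m}$ prefactor for $m\ge2$.
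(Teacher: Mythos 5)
The statement you are trying to prove is actually false as written, and your argument contains the gap responsible. A concrete counterexample is $m=4$, $a=(1,1,-1,-1)$: by inclusion--exclusion on the partition lattice together with the exact Diaconis--Shahshahani trace moments (or directly from the paper's determinantal expansion), one computes for $N\ge 4$
\begin{equation*}
\E\left[\re^{\ri(\theta_{k_1}+\theta_{k_2}-\theta_{k_3}-\theta_{k_4})}\right]=\frac{2}{N(N-1)},
\end{equation*}
which exceeds the claimed bound $4!\,N^{-3}=24N^{-3}$ as soon as $N\ge 11$.

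The precise gap in your proposal is the step where you declare ``the trace moment is $O(1)$.'' When a block $B$ of the partition happens to satisfy $s_B=\sum_{l\in B}a_l=0$ --- which is \emph{not} ruled out by the hypothesis that each individual $a_l$ is nonzero --- the factor $\tr(U^{s_B})=\tr(I_N)=N$ is deterministic and contributes a factor of $N$, not $O(1)$. A partition may contain up to $\lfloor m/2\rfloor$ such zero-sum blocks (each necessarily of size at least two), yielding a contribution of order $N^{\lfloor m/2\rfloor}/N^{\underline m}\asymp N^{-\lceil m/2\rceil}$, which is strictly larger than $N^{-m+1}$ once $m\ge 4$. Your concluding claim that ``for $m\ge 3$ one gets a strictly better power'' therefore has the inequality the wrong way round: $N^{-\lceil m/2\rceil}\ge N^{-m+1}$. (As a side remark, the prefactor $N^{\underline k}/N^{\underline m}$ you attribute to a $k$-block partition is not what Möbius inversion gives; the combinatorial prefactor is $\mu(\hat 0,\pi)/N^{\underline m}$, with $|\mu(\hat 0,\pi)|$ bounded independently of $N$, and every power of $N$ in the numerator comes from a $\tr(U^0)$ factor.)

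The paper's own proof has the same blind spot in a different guise: the claim that once $\sigma$ and $\ell_1$ are fixed there is ``at most one choice of $(\ell_2,\dots,\ell_m)$'' holds only when $\sigma$ is a single $m$-cycle, and the footnote's assertion that only $m$-cycles contribute fails whenever some proper subset of the $a_l$ sums to zero (e.g.\ $\sigma=(13)(24)$ in the example above yields $\asymp N^2$ admissible $\ell$'s). The lemma becomes true if one adds the hypothesis that no nonempty proper subset of $\{a_1,\dots,a_m\}$ sums to zero, or if one weakens the exponent to $-\lceil m/2\rceil$. Lemma~\ref{lemma:CUE_bound_2} and Proposition~\ref{moment_bound} do survive with the weakened exponent, because in that application one has the extra constraint $\mathfrak m+2p\le 2M$ (with $p=|\{k_j,k_j'\}|-\mathfrak m$), which yields $\lceil\mathfrak m/2\rceil\ge\mathfrak m+p-M$ and hence $N^{-\lceil\mathfrak m/2\rceil}\le N^{M-|\{k_j,k_j'\}|}$ --- but the reduction to Lemma~\ref{lemma:CUE_bound_1} as stated is not valid.
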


\begin{proof}
This follows from classical results on CUE as a determinantal point process (see for instance Proposition 3.7 and Lemma 3.8 in \cite{Meckes_book}). The correlation functions of $\mathrm{CUE}(N)$ are given by
\begin{equation}\label{k-correlation}
   \rho_m (\theta_1, \dots, \theta_m)
   =
   \det\left( K_N( \theta_j, \theta_k) \right)_{j,k=1}^m
\end{equation}
with the kernel
\begin{equation}\label{kernel}
    K_N(x,y) = \sum_{\ell=0}^{N-1} \re^{\ri \ell (x-y)}
\end{equation}
(note in particular that $K_N(x,x)=N$ whereas for $x \neq y$ it is a sum of exponential, not all of which will contribute -- this is the reason behind Lemma 8 in a nutshell)
It follows in particular that
\begin{align*}
\E \left( \re^{ \ri \sum_{l=1}^m a_l \theta_{k_l}} \right)
& =
\frac{(N-m)!}{N!}
\int 
\re^{ \ri \sum_{l=1}^m a_l \theta_{l}}
\det \left( K_N( \theta_j, \theta_k) \right)_{j,k=1}^m \dd \theta_1 \cdots \dd \theta_m \\
& =
\frac{(N-m)!}{N!}
\int 
\re^{ \ri \sum_{l=1}^m a_l \theta_{l}}
\sum_{\sigma \in \mathfrak{S}_m} \epsilon(\sigma)
\prod_{j=1}^m K_N( \theta_j, \theta_{\sigma(j)}) \dd \theta_1 \cdots \dd \theta_m \\
& = \frac{(N-m)!}{N!}
\sum_{\sigma \in \mathfrak{S}_m} 
\sum_{\ell_1, \dots \ell_m =0}^{N-1}
\epsilon(\sigma)
\int 
\re^{ \ri \sum_{l=1}^m a_l \theta_{l}}
\re^{\ri \sum_{j=1}^m \ell_j (\theta_j - \theta_{\sigma(j)})}
\dd \theta_1 \cdots \dd \theta_m
\end{align*}
as each term gives $\pm 1$ if and only if the sums in the exponentials exactly match, and zero otherwise. We bound it by 
\begin{align*}
\left| \E \left( \re^{ \ri \sum_{l=1}^m a_l \theta_{k_l}} \right) \right|
& \leq
\frac{(N-m)!}{N!}
\left| \left\{
\begin{array}{c}
    \sigma \in \mathfrak{S}_m,  \\
     0 \leq \ell_1, \dots, \ell_m \leq N-1 
\end{array}
 \ : \ \sum_{l=1}^m a_l \theta_{l} \equiv \sum_{j=1}^m \ell_j ( \theta_{\sigma(j)} - \theta_j) \right\} \right| \\
& \leq 
\frac{(N-m)!}{N!} m! N,
\end{align*}
where $\equiv$ stands for equality as polynomials in $\theta_1, \dots, \theta_m$. Indeed, there are less than $m!$ choice of $\sigma$, $N$ choices of $\ell_1$, and at most one choice of $(\ell_2, \dots, \ell_m)$ that works once $\sigma$ and $\ell_1$ are fixed\footnote{One could also note that the only choice of $\sigma$ that can possibly work is a large cycle, resulting in the whole sum being of the sign of $\epsilon(\sigma) = (-1)^{m+1}$; but we do not use this particular fact.}. The claim follows.
\end{proof}

\begin{lemma}\label{lemma:CUE_bound_2}
For any $M \geq 1$, indices $ k_1, k_1', \dots, k_M,k_M'$ (not necessarily distinct) and positive integer coefficients $\ell_1, \ell_1' \dots, \ell_M, \ell_M' \geq 1$,
\begin{equation}\label{CUE_bound_2}
\left| \E \left( \re^{ \ri \sum_{j=1}^M \ell_j \theta_{k_j} - \ell_j' \theta_{k_j'}} \right) \right| 
< 
(2M)! N^{M - |\{ k_1, k_1', \dots, k_M, k_M' \}|}.
\end{equation}
\end{lemma}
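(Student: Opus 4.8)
The plan is to reduce the general estimate \eqref{CUE_bound_2} to the ``distinct indices'' case already handled in Lemma \ref{lemma:CUE_bound_1} by a merging argument. Write $S := \{k_1, k_1', \dots, k_M, k_M'\}$ and let $p := |S|$, so $1 \leq p \leq 2M$; the bound to be proved reads $|\E(\cdots)| < (2M)! N^{M-p}$. First I would group the exponent $\sum_{j=1}^M (\ell_j \theta_{k_j} - \ell_j' \theta_{k_j'})$ by collecting, for each distinct value $k \in S$, the total integer coefficient $a_k := \sum_{j : k_j = k} \ell_j - \sum_{j : k_j' = k} \ell_j'$, so that the exponent becomes $\sum_{k \in S} a_k \theta_k$ with $p$ distinct indices. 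There are then two cases depending on whether all $a_k$ are nonzero.

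If every $a_k \neq 0$ (and $p \geq 2$), then Lemma \ref{lemma:CUE_bound_1} applies directly with $m = p$ and gives $|\E(\cdots)| < p!\, N^{-p+1}$. Since $p \leq 2M$ we have $p! \leq (2M)!$, and since $M \geq p - M$ is equivalent to $2M \geq p$, which holds, we get $N^{-p+1} = N^{M-p} \cdot N^{1-M} \leq N^{M-p}$ for $N \geq 1$; hence the bound follows. The edge case $p = 1$ must be treated separately: then the exponent is $a_{k}\theta_{k}$ for a single index, and $\E(\re^{\ri a_k \theta_k}) = \frac{1}{N}\E(\tr U^{a_k})$, which vanishes for $a_k \neq 0$ by unitary invariance (or equals $1$ if $a_k = 0$), so $|\E(\cdots)| \leq 1 = N^{M - p}$ when $M = 1$, and is even smaller otherwise; in any case it is at most $N^{M-p}$.

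The only genuinely new point is the case where some of the aggregated coefficients $a_k$ vanish. Let $T := \{k \in S : a_k \neq 0\}$ and $q := |T| \leq p$; then $\E(\re^{\ri \sum_{k \in S} a_k \theta_k}) = \E(\re^{\ri \sum_{k \in T} a_k \theta_k})$, and applying the previous paragraph with $T$ in place of $S$ (noting $q \leq 2M$) gives $|\E(\cdots)| < (2M)!\, N^{\max(1-q,\,0)}$ — using the convention that the bound is trivially $\leq (2M)!$ when $q \leq 1$, coming from $|\re^{\ri\theta}| = 1$. It remains to check $N^{\max(1-q,0)} \leq N^{M-p}$, i.e. $M - p \geq \max(1-q, 0)$. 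This is the step requiring care: it is \emph{not} always true for an arbitrary index multiset, but it \emph{is} forced by the structure $a_k = \sum \ell_j - \sum \ell_j'$. Indeed, if $a_k = 0$ for some $k$, then at least two of the $2M$ slots carry the index $k$ (a single slot contributes $\pm\ell \neq 0$ since all $\ell_j, \ell_j' \geq 1$), so each of the $p - q$ indices with $a_k = 0$ accounts for at least two slots, while each of the $q$ indices with $a_k \neq 0$ accounts for at least one; counting slots gives $2M \geq 2(p-q) + q = 2p - q$, i.e. $M \geq p - q/2 \geq p - q$, so $M - p \geq -q \geq 1 - q$, and also $M \geq p - q/2 \geq p - p/2 \cdot \mathbf{1}_{q<p}$... more directly, $M - p \geq -q/2 \cdot 2 + \ldots$; the clean inequalities $M - p \geq 1 - q$ (from $2M \geq 2p - q$ when $q < p$, hence $2M \geq 2p-q \geq 2p - q$ gives $M \geq p - q/2$, so $2(M-p) \geq -q \geq 2(1-q)$ iff $q \geq 2$, which holds since $q < p \leq 2M$ and $a$-vanishing forces $p \geq q+1 \geq 2$ only when $q \geq 1$; the remaining subcase $q = 0$ gives exponent $0$ and bound $\leq (2M)! \leq (2M)! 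N^{M-p}$ since $p \leq M$ when all slots pair up) and $M - p \geq 0$ (from $2M \geq 2p - q \geq p$ when $q \leq p$) together dominate $\max(1-q,0)$. I expect this slot-counting bookkeeping to be the main obstacle — not difficult, but fiddly — and I would present it cleanly as: each zero-coefficient index occupies $\geq 2$ of the $2M$ available slots, each nonzero-coefficient index occupies $\geq 1$, whence $2M \geq 2(p-q) + q$, which rearranges to both $M - p \geq -q/2$ and (combined with $p \leq 2M$) the two displayed inequalities needed to close the estimate.
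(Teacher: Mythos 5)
Your argument follows the same route as the paper's proof: collect the exponent into $\sum_k a_k\theta_k$, let $q$ be the number of distinct indices with $a_k\neq 0$ (the paper's $\mathfrak{m}$), dispatch $q=0$ and $q=1$ by inspection and unitary invariance, apply Lemma~\ref{lemma:CUE_bound_1} with $m=q$ when $q\geq2$, and close with a combinatorial inequality linking $p:=|\{k_1,k_1',\dots,k_M,k_M'\}|$, $q$ and $M$. Your slot count --- a vanishing aggregated coefficient forces its index to appear at least once among the $k_j$ \emph{and} at least once among the $k_j'$ (since all $\ell_j,\ell_j'\geq1$), hence to occupy at least $2$ of the $2M$ slots, giving $2M\geq 2(p-q)+q$ --- is a clean and slightly sharper version (it yields $M-p\geq -q/2$) of the paper's inequality $p\leq q+M$ ``with equality only if $q=0$''. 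The two do the same job, so this is essentially the paper's proof with a tidier bookkeeping step.

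That said, one step is wrong as written and must be corrected. The intermediate claim ``$|\E(\cdots)|<(2M)!\,N^{\max(1-q,0)}$'' is not what Lemma~\ref{lemma:CUE_bound_1} gives for $q\geq 2$, and the resulting target ``$M-p\geq \max(1-q,0)$'' is false in general: with all $2M$ indices distinct one has $p=q=2M$ and $M-p=-M<0=\max(1-q,0)$. What you actually need --- and what your own slot count supplies --- is the sharper exponent: for $q\geq 2$, Lemma~\ref{lemma:CUE_bound_1} gives $|\E(\cdots)|<q!\,N^{1-q}\leq(2M)!\,N^{1-q}$, and $2M\geq 2p-q$ yields $M-p\geq -q/2\geq 1-q$ precisely because $q\geq 2$; for $q=1$ the expectation vanishes; for $q=0$ it equals $1$, and the slot count gives $M\geq p$, so $(2M)!\,N^{M-p}\geq 2>1$. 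Replace the garbled middle sentence of your last paragraph with this three-case check and drop the $\max(1-q,0)$; the ``clean'' formulation you sketch at the very end is indeed the one to keep.
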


\begin{proof}
Let $\mathfrak{m} = \mathfrak{m} (\ell_j, \ell_j',k_j, k_j')_{j=1, \dots, M}$ stand for the number of different $\theta_k$ terms with non-zero coefficient in the linear combination $\sum_{j=1}^M \ell_j \theta_{k_j} - \ell_j' \theta_{k_j'}$. The key combinatorial fact will be the following inequality, that holds for any choice of $k,k', \ell, \ell'$:
\begin{equation}\label{combi_fact}
    \left| \{ k_1, k_1', \dots, k_M, k_M' \} \right| \leq \mathfrak{m} + M
    \quad \text{with equality only if} 
    \quad \mathfrak{m} = 0.
\end{equation}
Indeed, the difference between the cardinal $|\{ k_1, k_1', \dots, k_M, k_M' \}|$ and $\mathfrak{m}$ is exactly the number of $\theta_k$ terms that exactly vanish due to a correspondence between coefficients $\ell, \ell'$. Due to the positive sign of these coefficients, this always implies that (at least) one term $\theta_k$ and (at least) one term $\theta_{k'}$ have canceled one another out, which can be the case for at most $M$ distinct coefficients, hence the inequality. Moreover, if such cancellations do concern $M$ distinct coefficients (equality case), then $M$ distinct $\theta_k$ terms vanish with $M$ distinct $\theta_{k'}$ terms, with an exact correspondence between the two sides -- in particular, in this case, the whole linear combination vanishes, and $\mathfrak{m}=0$.

We now consider three possible cases: either $\mathfrak{m}=0$, $\mathfrak{m}=1$ or $\mathfrak{m} \geq 2$.
\begin{enumerate}[label=(\roman*)]
\item If $\mathfrak{m}=0$, the the l.h.s. of \eqref{CUE_bound_2} is $1$ whereas the r.h.s is at least $(2M)!$ by inequality~\eqref{combi_fact}.
\item If $\mathfrak{m}=1$, the l.h.s is $0$ by unitary invariance.
\item If $\mathfrak{m} \geq 2$, the linear combination has $\mathfrak{m}$ distinct terms with non-zero integer coefficients, and Lemma \ref{lemma:CUE_bound_1} gives
$$\left| \E \left( \re^{ \ri \sum_{j=1}^M \ell_j \theta_{k_j} - \ell_j' \theta_{k_j'}} \right) \right| 
<
\mathfrak{m}! \ N^{-\mathfrak{m}+1}
\leq (2M)! \ N^{-\mathfrak{m}+1}.
$$
Moreover, as $\mathfrak{m} \neq 0$ we are not in the equality case of \eqref{combi_fact} and so
$$
- \mathfrak{m} + 1 \leq M- \left| \{ k_1, k_1', \dots, k_M, k_M' \} \right|.
$$
\end{enumerate}
The claim follows.
\end{proof}

\begin{proposition}\label{moment_bound}
For any $M \geq 1$, $z \in \D$, for large enough $N$,
\begin{equation}
    \E |\W_2 (z)|^{2M} < \frac{C_M |z|^{4M}}{N^M (1-|z|)^{2M-1}}
\end{equation}
where $C_M$ is a constant depending only on $M$ and not on $N,z$.
\end{proposition}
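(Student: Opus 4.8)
The plan is to repeat the computation behind Proposition~\ref{exact_var} at the level of the $2M$-th moment, using the bound of Lemma~\ref{lemma:CUE_bound_2} in place of the explicit two-point identity \eqref{two_theta_cancel}. The only subtle point is that a naive termwise bound produces a denominator $(1-|z|)^{2M}$ instead of the sharper $(1-|z|)^{2M-1}$; the missing factor of $(1-|z|)$ is recovered from the invariance of the Haar measure under $U\mapsto\re^{\ri c}U$ (which shifts every eigenphase $\theta_k\mapsto\theta_k+c$): this forces the relevant eigenvalue expectation to vanish unless a total-frequency balance holds -- namely $\sum_j\ell_j=\sum_j\ell_j'$ in the notation below -- and this single linear constraint lowers by one the dimension of the surviving sum over frequencies.

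Concretely, starting from the spectral decomposition \eqref{spectral_decomposition}, $\W_2(z)=\sum_k|r_{k1}|^2\sum_{\ell\ge2}z^\ell\re^{-\ri\ell\theta_k}$, I would expand the product of $M$ copies of $\W_2(z)$ and $M$ copies of $\overline{\W_2(z)}$ and use the independence of eigenvalues and eigenvectors of a CUE matrix to write
\begin{equation*}
\E|\W_2(z)|^{2M}=\sum_{\vec k,\vec k'}\E\!\Big(\prod_{j=1}^M|r_{k_j1}|^2|r_{k_j'1}|^2\Big)\sum_{\vec\ell,\vec\ell'\in\{2,3,\dots\}^M}z^{|\vec\ell|}\overline{z}^{|\vec\ell'|}\,\E\!\Big(\re^{-\ri\sum_{j=1}^M(\ell_j\theta_{k_j}-\ell_j'\theta_{k_j'})}\Big),
\end{equation*}
with $\vec k=(k_1,\dots,k_M)$, $\vec\ell=(\ell_1,\dots,\ell_M)$, $|\vec\ell|=\sum_j\ell_j$, and similarly for the primed variables. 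Two inputs then feed in. First, since $(|r_{k1}|^2)_{k=1}^N$ is Dirichlet$(1,\dots,1)$-distributed (equivalently, by Weingarten calculus, generalizing \eqref{elementary_Wg}), the eigenvector expectation equals $\tfrac{(N-1)!}{(N+2M-1)!}\prod_i a_i!$, where $a_i\ge1$ are the multiplicities of the distinct indices among $k_1,\dots,k_M,k_1',\dots,k_M'$; in particular it is at most $(2M)!\,N^{-2M}$. Second, by Lemma~\ref{lemma:CUE_bound_2} the modulus of the eigenvalue expectation is at most $(2M)!\,N^{M-p}$ uniformly in $\vec\ell,\vec\ell'$, where $p:=|\{k_1,k_1',\dots,k_M,k_M'\}|$, and by the invariance remark above it vanishes unless $|\vec\ell|=|\vec\ell'|$.

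Finally I would assemble three elementary estimates. (i) Grouping $(\vec k,\vec k')$ by the number $p$ of distinct indices, there are at most $N^p(2M)^{2M}$ such tuples for each $1\le p\le 2M$, each contributing at most $\big((2M)!\big)^2N^{-2M}N^{M-p}$, so the resulting sum over $\vec k,\vec k'$ is bounded by $C_M N^{-M}$. (ii) With $|\vec\ell|=|\vec\ell'|$ in force, the surviving frequency sum is $\sum_{n\ge2M}\binom{n-M-1}{M-1}^2|z|^{2n}$, and since $\binom{n-M-1}{M-1}^2$ is a polynomial in $n$ of degree $2M-2$, a standard geometric-series estimate bounds it by $C_M\,|z|^{4M}(1-|z|)^{-(2M-1)}$ -- this is precisely where the saved power of $(1-|z|)$ appears, compared with the unconstrained sum $\big(\sum_{\ell\ge2}|z|^\ell\big)^{2M}\asymp|z|^{4M}(1-|z|)^{-2M}$. (iii) Multiplying the bounds from (i) and (ii) and absorbing all combinatorial factors into $C_M$ gives the claim, valid for $N$ large enough (say $N\ge2M$, so that Lemma~\ref{lemma:CUE_bound_2} applies). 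I do not expect a genuine obstacle; the work is the many-moments analogue of Proposition~\ref{exact_var}, and the only step requiring real attention is (ii), i.e. carefully exploiting the global-phase constraint so as not to lose the factor $(1-|z|)$.
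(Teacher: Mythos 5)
Your proposal is correct and follows essentially the same architecture as the paper's proof: expand $\E|\W_2(z)|^{2M}$ via the spectral decomposition, split the expectation over eigenvalues and eigenvectors, invoke the global-phase invariance to impose $\sum_j\ell_j=\sum_j\ell_j'$, apply Lemma~\ref{lemma:CUE_bound_2} to the eigenvalue factor, bound the eigenvector factor by $C_MN^{-2M}$, control the sum over $(\vec k,\vec k')$ by grouping according to the number of distinct indices, and finally estimate the constrained frequency sum to recover the exponent $2M-1$ rather than $2M$ in $(1-|z|)$. The two places where you diverge are mild but worth noting. For the eigenvector expectation you use the closed Dirichlet$(1,\dots,1)$ moment formula $\tfrac{(N-1)!}{(N+2M-1)!}\prod_i a_i!$, whereas the paper bounds the same quantity via the uniform Weingarten estimate of Collins--Matsumoto (their Theorem~1.1), which is what forces the ``$N$ large enough'' proviso $N>2\sqrt{3}M^{7/4}$; your Dirichlet route is more elementary, gives the exact value, and needs no size restriction beyond the $N\geq 2M$ already required by Lemma~\ref{lemma:CUE_bound_2}. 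For the frequency sum, the paper identifies $\sum_{L\geq0}\binom{L+M-1}{M-1}^2|z|^{2L}$ as ${}_2F_1(M,M;1;|z|^2)$ and applies Euler's transformation to extract $(1-|z|^2)^{1-2M}$ times a polynomial, whereas you simply observe that $\binom{L+M-1}{M-1}^2$ is a degree-$(2M-2)$ polynomial in $L$ and invoke $\sum_L L^d x^L=\OO((1-x)^{-(d+1)})$; both give the same bound, and your version is a touch more pedestrian while the paper's is more structural. Either way, the key idea you correctly identify — that the single linear constraint $\sum\ell_j=\sum\ell_j'$ from phase invariance is what saves one power of $(1-|z|)$ — is exactly the mechanism the paper exploits.
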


\begin{proof}
We start with the spectral decomposition, expand, and split the expectation as in the proof of Proposition \ref{exact_var}. This yields
\begin{align*}
\E |\W_2 (z)|^{2M}
& = \hspace{-.2in}
\sum_{k_1,k_1', \dots, k_M,k_M'=1}^N
\hspace{-.1in}
\E \left(
\prod_{j=1}^M |r_{1,k_j}|^2 |r_{1,k_j'}|^2 
\right)
\sum_{\ell_1,\ell_1', \dots, \ell_M, \ell_M' \geq 2}
\hspace{-.2in}
z^{\sum_{j=1}^{M} \ell_j} \ovr{z}^{\sum_{j=1}^{M} \ell_j'}
\E \left( \re^{\sum_{j=1}^M \ell_j \theta_{k_j} - \ell_j' \theta_{k_j'}} \right) \\
\leq
|z|^{4M} & \hspace{-.1in}
\sum_{
k_1,k_1', \dots, k_M,k_M'=1
}^N
\underbrace{\E \left(
\prod_{j=1}^M |r_{1,k_j}|^2 |r_{1,k_j'}|^2 
\right)}_{\mathrm{(I)}}
\sum_{
\substack{
    \ell_1,\ell_1', \dots, \ell_M, \ell_M' \geq 0 \\
    \sum \ell_j = \sum \ell_j'}
}
\hspace{-.1in}
|z|^{2\sum_{j=1}^M \ell_j}
\underbrace{
\left| \E \left( \re^{\sum_{j=1}^M \ell_j \theta_{k_j} - \ell_j' \theta_{k_j'}} \right) \right|}_{\mathrm{(II)}},
\end{align*}
where we used that, by unitary invariance, only the terms with $\sum_{j=1}^M \ell_j = \sum_{j=1}^M \ell_j'$ contribute to the second sum. We now proceed in successive bounds. \\

The eigenvector term (I) is bounded by a general Weingarten bound
\begin{equation}\label{crude_bound_1}
\E \left(
\prod_{j=1}^M |r_{1,k_j}|^2 |r_{1,k_j'}|^2 \right)
< C_M N^{-2M}
\end{equation}
with a constant $C_M$ that depends only on $M$. This can be obtained for instance from Theorem 1.1 in \cite{CollinsMatsumoto}, provided $N > 2\sqrt{3}M^{7/4}$ (note that this requirement is the only reason for the provision of $N$ being "large enough" in the statement). Indeed, this theorem gives a uniform bound on all Weingarten function $\Wg (\sigma \tau^{-1}, N )$ with $(\sigma, \tau) \in \mathfrak{S}_M^2$. By the fundamental theorem of Weingarten calculus, one can write the expectation (I) as a sum of such Weingarten functions with admissible pairs $(\sigma, \tau)$. Moreover, this sum has a maximum of $(M!)^2$ terms -- a number which only depends on $M$ and is therefore absorbed into the constant $C_M$.  \\

The eigenvalue term (II) is directly bounded by Lemma \ref{lemma:CUE_bound_2}. The factor $(2M)!$ is absorbed into the constant $C_M$, so that together, these first two bounds yield
\begin{align*}
\E |\W_2 (z)|^{2M}
& <
C_M
|z|^{4M}
\hspace{-.2in}
\sum_{k_1,k_1', \dots, k_M,k_M'=1}^N
\hspace{-.2in}
N^{-2M}
\sum_{
\substack{
    \ell_1,\ell_1', \dots, \ell_M, \ell_M' \geq 0 \\
    \sum \ell_j = \sum \ell_j'}
}
N^{M - |\{ k_1,k_1', \dots, k_M,k_M' \}|} |z|^{2\sum_j \ell_j} \\
&  = \frac{C_M |z|^{4M}}{N^M}
\hspace{-.2in}
\underbrace{
\sum_{k_1,k_1', \dots, k_M,k_M'=1}^N
\hspace{-.2in}
N^{ - |\{ k_1,k_1', \dots, k_M,k_M' \}|} 
}_{\mathrm{(III)}}
\underbrace{
\sum_{\ell_1,\ell_1', \dots, \ell_M, \ell_M' \geq 0}
    \mathbf{1}_{\sum \ell_j = \sum \ell_j'}
    |z|^{2\sum_{j=1}^M \ell_j} }_{\mathrm{(IV)}}
\end{align*}
The sum (III) can be written with a first sum over configurations of $k_1, k_1', \dots k_M, k_M'$, i.e. partitions $\pi$ of $[\![1,2M]\!]$, and then a sum over choices of indices coherent with $\pi$:
\begin{equation}\label{partition_of_ks}
    \mathrm{(III)} =
    \sum_{\pi \in \mathscr{P}([\![1,2M]\!])} \sum_{ (k_j,k_j')_{j=1}^{M} \sim \pi } N^{ - |\{ k_1,k_1', \dots, k_M,k_M' \}|} ,
\end{equation}
where by $(a_j)_{j} \sim \pi$ we mean that $a_i = a_j$ if and only if $i,j$ are in the same block of $\pi$. For a given partition $\pi$, the number of possible choices of $k_j, k_j'$ is exactly
$$
N (N-1) \cdots (N-|\pi|+1)
\leq 
N^{|\pi|}
= N^{|\{ k_1,k_1', \dots, k_M,k_M' \}|}
$$
so that (III) is bounded by $|\mathscr{P}( [\![1,2M]\!])|$. Note that this constant depends only on $M$ and we can absorb it into $C_M$. \\

The last term (IV) can be computed exactly. Recall that the number of partitions of an integer $L \geq 0$ in $M \geq 1$ parts is given by
\begin{equation}\label{L_in_M_parts}
    \left| \left\{
    \ell_1, \dots, \ell_M \geq 0 \ : \ 
    \sum_{i=1}^M \ell_i = L
    \right\} \right|
    =
    \binom{L+M-1}{M-1}.
\end{equation}
The expression (IV) can hence be identified as a hypergeometric function of $|z|^2$, i.e.,
\begin{equation}\label{link_with_2F1}
\sum_{\ell_1,\ell_1', \dots, \ell_M, \ell_M' \geq 0}
\hspace{-.1in} |z|^{2\sum_j \ell_j}
\mathbf{1}_{\sum_j \ell_j = \sum_j \ell_j'}
=
\sum_{L \geq 0}
\binom{L+M-1}{M-1}^2
|z|^{2L}
=
{}_2 F_1 \left( M,M;1;|z|^2 \right).
\end{equation}
We remind the reader of the definition of the hypergeometric functions
\begin{equation}\label{def_2F1}
    {}_2 F_1 \left( a,b ; c; z \right) = \sum_{n \geq 0} \frac{(a)_n (b)_n}{(c)_n} \frac{z^n}{n!},
\end{equation}
where $(\cdot)_n$ is the rising Pochhammer symbol, defined for any $q \in \C$ by
\begin{equation}\label{def_pochhammer}
 (q)_n = 
    \left\{
    \begin{array}{rl}
        1 & \text{ if } n=0 \\
        q(q+1) \cdots (q+n-1) & \text{ if } n \geq 1 .
    \end{array}
    \right.
\end{equation}
One of the most celebrated identities of hypergeometric functions is Euler's identity
\begin{equation}\label{Euler_identity}
    {}_2 F_1 \left( a,b ; c; z \right) = 
    (1-z)^{c-a-b}
    {}_2 F_1 \left( c-a , c-b ; c; z \right).
\end{equation}
Applying~\eqref{Euler_identity} to the parameters from \eqref{link_with_2F1} yields
\begin{align*}\label{Euler_identity_applied}
    {}_2 F_1 \left( M,M;1;|z|^2 \right) & = 
    (1-|z|^2)^{1-2M}
    {}_2 F_1 \left( 1-M , 1-M ; 1; |z|^2 \right) \\
    & = \frac{1}{(1-|z|^2)^{2M-1}} \sum_{n=0}^{M-1} \frac{((1-M)_n)^2}{n!^2} |z|^{2n}
\end{align*}
where the last equality follows from the definitions \eqref{def_2F1} and \eqref{def_pochhammer}. The last term is a polynomial with positive terms and hence bounded by a constant depending only on $M$ (for instance, its value at $|z|=1$). This concludes the proof. \end{proof}
We can now give the proof of Theorem \ref{thm:assumption_holds}.

\begin{proof}[Proof of Theorem \ref{thm:assumption_holds}]
For CUE, it is well known that $\om_1 = U_{11}^* \disteq \re^{\ri \theta} \sqrt{\beta_{1,N}}$ with uniform $\theta$ and an independent beta distribution (see, e.g.,~\cite[Ch.~2]{Meckes_book} or~\cite{BHNY}). This implies~\eqref{assum_2.1_CUE}. \\

It remains to establish \eqref{assum_2.2_CUE}. Observing that the desired stochastic domination bound trivially holds at the origin, let $z\neq0$ and consider the function $\smash{\widetilde{\W}_2(z):=z^{-2}\mathscr{W}_2(z)}$. Fix $\ell>2\delta$ and let $\mathds{L}\subseteq\D$ be a lattice with $|\mathds{L}|\simeq N^\ell$ such that for every $z\in \D_{\delta}=D(0,1-N^{-\delta})$ there is some $z_0\in\mathds{L}$ with $|z-z_0|\leq C_{\mathds{L}}N^{-\ell/2}$. We start by establishing a stochastic domination bound for $\smash{\widetilde{\W}_2}$ that holds point-wise at $z_0 \in \mathds{L}\cap \D_{\delta}$. Fix $\eps,A>0$ and let $M\in\N$ such that $2\eps M>A+\ell$. Using Markov's inequality and the moment bound in Proposition~\ref{moment_bound} yields
\begin{equation}\label{eq-sdbound1}
\PP\left(\widetilde{\W}_2(z_0)>N^\eps\frac{1}{\sqrt{N}(1-|z_0|)}\right)\leq\frac{C_M}{N^{2M\eps}}\leq N^{-A-\ell}.
\end{equation}
In particular, $\smash{\W_2(z_0)\prec\frac{|z_0|^2}{\sqrt{N}(1-|z_0|)}}$ for any lattice point $z_0$ with $|z_0|<1-N^{-\delta}$. By a union bound,~\eqref{eq-sdbound1} extends to
\begin{displaymath}
\PP \left( \exists z_0\in\mathds{L}\cap \D_{\delta}: \widetilde{\W}_2(z_0)>N^\eps\frac{1}{\sqrt{N}(1-|z_0|)} \right)< N^{-A}.
\end{displaymath}
Next, let $z\in \D_{\delta}$. Recalling that we may assume $v=e_1$, compute
\begin{displaymath}
\widetilde{\W}_2'(z)=\left(\frac{2zU^*(I_N-zU^*)-z^2(U^*)^3}{z^2(I_N-zU^*)^2}-\frac{(zU^*)^2}{z^3(I_N-zU^*)}\right)_{11}=\left(\frac{-(U^*)^3}{(I_N-zU^*)^2}\right)_{11}
\end{displaymath}
We estimate the derivative by
\begin{displaymath}
|\widetilde{\W}_2'(z)|\leq \left\|\frac{-(U^*)^3}{(I_N-zU^*)^2}\right\|\leq \frac{1}{(1-|z|)^2},
\end{displaymath}
where the last inequality follows from the spectral theorem and the fact that all eigenvalues of $U$ lie on the unit circle. Applying the mean value theorem for the analytic function $\widetilde{\W}_2(z)$ yields
\begin{displaymath}
\left|\widetilde{\W}_2(z)-\widetilde{\W}_2(z_0)\right| \leq |z-z_0| \max \left\{ \left|\widetilde{\W}_2'(\xi)\right|\ :\ \xi \in [z_0,z] \right\} \leq C_{\mathds{L}}N^{-\ell/2} \frac{1}{(1-|z|)^2} \leq C_{\mathds{L}}N^{-\ell/2+\delta} \frac{1}{1-|z|}
\end{displaymath}
where we w.l.o.g. picked $z_0\in\mathds{L}\cap\D_{\delta}$ that is closer to the center of the disc than $z$. Hence, the stochastic domination bound extends to all of $\D_{\delta}$ and we obtain
\begin{displaymath}
\widetilde{\W}_2(z)\prec \frac{1}{\sqrt{N}(1-|z|)}
\end{displaymath}
uniformly for $z\in D(0,1-\delta)\setminus\{0\}$. As multiplying both sides with $|z|^2$ does not change the stochastic domination bound,~\eqref{assum_2.2_CUE} follows.
\end{proof}

\subsection{Optimality of the critical timescale for CUE}
The estimates derived above from Assumption \ref{smart_ass} (see Proposition \ref{prop_Rouche_domain_R}) do not give sufficient precision at the critical timescale.
What holds in the CUE case (and is expected more generally) is that, typically, for $t= \mu N^{-\frac12}$, several eigenvalues are at an order one distance from both the origin and the unit circle, implying in particular that the outlier is not strongly separated at the critical timescale.
\begin{theorem}
If $U$ is Haar distributed, and $v$ is independent from $U$, there is no strongly separated outlier at time $t=\alpha N^{-\frac12}$.
\end{theorem}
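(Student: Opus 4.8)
The plan is to use the integrability of the CUE to identify the $N\to\infty$ limit of the spectral equation of Lemma~\ref{lem-trajectories} inside the bulk of $\D$, and to show that this limiting equation already has two solutions in a fixed small disc with positive probability. By unitary invariance of the CUE and independence of $v$ we may take $v=e_1$; fix $\alpha\neq 0$ and set $t=\alpha N^{-1/2}$. Put $P_N(z):=\sqrt N\,(\W(z)-1)$; expanding the resolvent, this is the function $P_N(z)=\sum_{\ell\geq 1}c_\ell^{(N)}z^\ell$, holomorphic on $\D$ (the poles of $\W$ lie on $\partial\D$), with $c_\ell^{(N)}:=\sqrt N\,\overline{(U^\ell)_{11}}$. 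By Lemma~\ref{lem-trajectories}, for every fixed $r\in(0,1)$ the eigenvalues of $G(\alpha N^{-1/2})$ lying in $D(0,r)$ are exactly the zeros (counted with multiplicity) of $P_N-c_N$ in $D(0,r)$, where $c_N:=\sqrt N\,t/(1-t)\to\alpha$. The argument is then a passage to the limit in this zero-counting problem.

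The first and main step is to prove that $P_N$ converges in distribution, in the space of holomorphic functions on $\D$ with local uniform convergence, to the Gaussian analytic function $P_\infty(z)=\sum_{\ell\geq 1}g_\ell z^\ell$ with $(g_\ell)_{\ell\geq 1}$ i.i.d.\ standard complex Gaussian. Finite-dimensional convergence of $(P_N(z_1),\dots,P_N(z_m))$ reduces, after a uniform tail estimate for $\sum_{\ell>L}\E|c_\ell^{(N)}|^2 r^{2\ell}$, to the joint convergence $(c_1^{(N)},\dots,c_L^{(N)})\distconv(g_1,\dots,g_L)$; this is where the integrable structure enters, the relevant mixed moments being computed by Weingarten calculus exactly as in Proposition~\ref{exact_var} (which already pins down the limiting covariance, namely $\E[c_\ell^{(N)}\overline{c_m^{(N)}}]\to\delta_{\ell m}$ and $\E[c_\ell^{(N)}c_m^{(N)}]\to 0$) and Lemmas~\ref{lemma:CUE_bound_1}--\ref{lemma:CUE_bound_2} (which control the higher moments, only pairings surviving in the limit). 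Tightness in the holomorphic-function topology follows from Proposition~\ref{moment_bound} together with $\omega_1^{(N)}\disteq\sqrt N\,\beta_{1,N}$ (Theorem~\ref{thm:assumption_holds}, whence $\E|\omega_1^{(N)}|^{2M}$ is bounded in $N$): since $P_N(z)=\omega_1^{(N)}z+\sqrt N\,\W_2(z)$, one gets $\E|P_N(z)|^{2M}$ bounded uniformly in $N$ on each $\overline{D(0,r)}$ with $r<1$, and Montel's theorem upgrades this to tightness.

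Granting $P_N\distconv P_\infty$, fix a generic $r\in(0,1)$, so that almost surely $P_\infty-\alpha$ has no zero on $\{|z|=r\}$ (true for a.e.\ $r$, the zero set of the a.s.\ non-trivial function $P_\infty-\alpha$ being a.s.\ discrete). By Skorokhod coupling and Hurwitz's theorem the number of zeros of $P_N-c_N$ in $D(0,r)$ converges in distribution to $\mathcal N_r:=\#\{z\in D(0,r):P_\infty(z)=\alpha\}$, hence so does the number of eigenvalues of $G(\alpha N^{-1/2})$ in $D(0,r)$. Two soft facts about $\mathcal N_r$ then finish the proof. First, since $P_\infty$ is Gaussian with $P_\infty(0)=0$, the law of $P_\infty$ has full support in the space of holomorphic functions on $D(0,r)$ vanishing at $0$; approximating, uniformly on a slightly smaller disc, the explicit target $h(z)=\tfrac{16\alpha}{r^2}z^2$ (for which $h(0)=0$ while $h-\alpha$ has the two simple zeros $\pm r/4\in D(0,r)$) and applying Rouché's theorem gives $\PP(\mathcal N_r\geq 2)>0$ for \emph{every} $r\in(0,1)$. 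Second, as $(P_\infty-\alpha)(0)=-\alpha\neq 0$, almost surely $P_\infty-\alpha$ has no zero near $0$, so $\PP(\mathcal N_r=0)\to 1$ as $r\to 0$; consequently, for any deterministic $\rho_N\to 0$ one has $\PP\bigl(\#\{\text{eigenvalues of }G(\alpha N^{-1/2})\text{ in }D(0,\rho_N)\}=0\bigr)\to 1$.

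It remains to derive the contradiction. Suppose $G$ had a strongly separated outlier towards the origin at $t=\alpha N^{-1/2}$, with $\alpha_1<\alpha_2$ and $c_1,c_2\in(0,1)$, and put $\rho_1^{(N)}=c_1 N^{\alpha_1}/(1+N^{\alpha_1})$. If $\alpha_1<0$ then $\rho_1^{(N)}\to 0$, and the last statement of the previous paragraph contradicts item~(i) of the definition, which requires exactly one eigenvalue in $D(0,\rho_1^{(N)})$ with high probability. If $\alpha_1\geq 0$ then $\rho_1^{(N)}$ is bounded below by a positive constant for large $N$; picking a generic $r$ below that constant, item~(i) would force at most one eigenvalue in $D(0,r)$ with high probability, contradicting $\PP(\mathcal N_r\geq 2)>0$. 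Either way item~(i) fails with probability bounded away from $0$, so there is no strongly separated outlier at $t=\alpha N^{-1/2}$. The crux is the first step: the convergence $P_N\distconv P_\infty$ to a \emph{non-degenerate} Gaussian limit. Identifying that the rescaled entries $\sqrt N\,\overline{(U^\ell)_{11}}$ become independent standard complex Gaussians is the genuinely CUE-specific input, obtained through Weingarten calculus; it must be combined with tightness in the holomorphic-function topology (from Proposition~\ref{moment_bound}) and with the bookkeeping that transfers convergence of integer-valued eigenvalue counts into the lower bound $\liminf_N\PP(\cdot\geq 2)>0$ via Hurwitz's theorem. By comparison, the full-support argument for the GAF and the case split on $\alpha_1$ are routine.
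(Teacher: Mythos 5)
Your argument takes a genuinely different route from the paper's. The paper relies on the integrable structure of the model: it invokes the exact joint eigenvalue density from Fyodorov, randomizes (``Poissonizes'') the time so that $t^2\sim\beta_{k,N}$, which turns the density into a radially symmetric determinantal one, and then applies Kostlan's theorem to reduce the relevant events to elementary statements about independent beta random variables, giving explicit formulas such as $\PP(\forall j,\,|\lambda_j|^2>a)=\prod_{j\geq 1}(1-a^j)$ and $\PP(E_{2,b})>b^6$. Your route instead passes to the functional limit of the spectral equation from Lemma~\ref{lem-trajectories}: you rescale $P_N=\sqrt{N}(\W-1)$, identify its limit as the hyperbolic Gaussian analytic function $P_\infty(z)=\sum_{\ell\geq1}g_\ell z^\ell$, and translate the eigenvalue-counting problem into a zero-counting problem for $P_\infty-\alpha$, finishing with Hurwitz's theorem, the full-support property of the GAF, and the case split on $\alpha_1$. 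Both arguments do arrive at the same contradiction with the definition of strong separation (forcing $\alpha_1\geq 0\geq\alpha_2$). Your approach is conceptually attractive because it runs entirely through the $\W$-framework that organizes the rest of the paper, is consistent with the scaling limit of \cite{ForresterIpsen}, and in principle extends to any $U$ whose rescaled entries $\sqrt N\,(U^\ell)_{11}$ satisfy the joint CLT; the paper's approach is shorter, is genuinely CUE-specific, and yields exact probabilities rather than mere positivity.

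One caveat on the central step. You assert that the joint convergence $(c_1^{(N)},\dots,c_L^{(N)})\distconv(g_1,\dots,g_L)$ to independent standard complex Gaussians follows from ``the relevant mixed moments being computed by Weingarten calculus exactly as in Proposition~\ref{exact_var} \dots and Lemmas~\ref{lemma:CUE_bound_1}--\ref{lemma:CUE_bound_2}.'' Those lemmas only provide \emph{upper bounds} of the form $(2M)!\,N^{M-|\{k_1,\dots,k_M'\}|}$; they suffice for Markov and tightness, and they do isolate the configurations with $|\{k_1,\dots,k_M'\}|=M$ as the only $O(1)$ contributions, but they do not identify the \emph{value} of those contributions. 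To run a method-of-moments CLT you must show that the surviving $O(1)$ terms match the Wick/Isserlis formula, which requires the exact leading-order asymptotics of the Weingarten coefficients and of $\E\exp(\ri\sum a_l\theta_{k_l})$ in the pairing configurations, not just the bounds proved in the paper. This is additional (if believable) work, and you are right to flag it as the crux. Aside from this, the soft steps — the Hardy-space Cameron--Martin argument for full support, the Rouch\'e construction showing $\PP(\mathcal N_r\geq 2)>0$, the $\PP(\mathcal N_r=0)\to1$ as $r\to0$, and the case analysis on $\alpha_1$ — are all correct as stated.
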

We give a short proof inspired by \cite[Sec.~3.3]{FyodorovSommers2003} that relies on an exact formula from \cite{Fyodorov2001} and a Poissonization trick. A more complete treatment of these statistics is provided in \cite{Fyodorov_CUE}.
\begin{proof}
Let $t \in [0,1]$. It is known from \cite{Fyodorov2001} 
that the joint density of the eigenvalues of the $UA$ model at time $t$ is proportional to\footnote{Note that this density would be easy to handle, if not for the delta function. This condition is the symptom of a dimensional mismatch, which we resolve by taking a random $t$ (`Poissonization'), thus adding a degree of freedom that we can later remove (`de-Poissonization') to prove the claim.}
\begin{equation}\label{joint_density_at_t}
(1-t^2)^{1-N}\delta\left(t^2-\prod_{l=1}^N|z_l|^2\right)\prod_{1\leq j<k\leq N}|z_j-z_k|^2
\end{equation}
with a normalizing constant that does not depend on $t$. We now assume the parameter $t$ to be random such that $t^2$ has distribution $F(x)\dd x$ for some suitable density $F$. The eigenvalues of the $UA$ model at this random time $t$ have the joint density
\begin{equation}\label{poissonized_density_F}
\left(1-\prod_{l=1}^N|z_l|^2 \right)^{1-N} F\left( \prod_{l=1}^N|z_l|^2 \right)\prod_{1\leq j<k\leq N}|z_j-z_k|^2.
\end{equation}
For any $k \geq 1$, we choose $F(x) = x^{k-1}(1-x)^{N-1}$, which is to say that $t^2$ follows a $\beta_{k,N}$ distribution. The joint density of eigenvalues now simplifies to
\begin{equation}\label{poissonized_density_beta}
\prod |z_l|^{2(k-1)}\prod_{1\leq j<k\leq N}|z_j-z_k|^2,
\end{equation}
i.e., they form a determinantal point process with radial symmetry. 
In this setting, Kostlan's theorem gives a particularly nice description of the distribution of the squared norm of the radii, namely
\begin{equation}\label{Kostlan}
\{|\la_1|^2,\dots,|\la_N|^2\}\overset{d}{=}\{\beta_{k,1},\dots,\beta_{k+N-1,1}\}
\end{equation}
with the $\beta$ variables on the right-hand side being independent (see \cite{Kostlan} for Kostlan's original paper in the complex Ginibre case, and \cites{Dubach_Powers,HKPV} for generalizations of this counter-intuitive property and various comments).

The relevant probabilities can now be computed exactly. Assuming $k=1$ for simplicity, the random time parameter is of order $N^{-\frac12}$. More precisely, for any $b>a>0$ we have

\begin{align*}
    \PP \left( t \in \left(\frac{a}{\sqrt{N}} , \frac{b}{\sqrt{N}} \right) \right)
    & = \PP \left(\beta_{1,N} \in \left( \frac{a^2}{N}, \frac{b^2}{N} \right) \right)
    = N \int_{a^2/N}^{b^2/N} (1-x)^{N-1} \dd x \\
    & = \left( 1 - \frac{a^2}{N} \right)^N
    - \left( 1 - \frac{b^2}{N} \right)^N
    \rightarrow \re^{-a^2} - \re^{-b^2} > 0,
\end{align*}
using that $t^2$ is $\beta_{1,N}$-distributed. The probability of having either none or several eigenvalues at some distance from the origin or the unit circle can be given explicitly using~\eqref{Kostlan}. For any $a \in (0,1)$, it follows that
$$
\PP \left( \forall j, |\la_j|^2 > a \right)
 = \prod_{j =1}^N \PP \left( \beta_{j,1} > a \right)
 = \prod_{j=1}^N \int_{a}^{1} j x^{j-1} \dd x 
 = \prod_{j=1}^N (1 - a^j) > \prod_{j = 1}^{\infty} (1 - a^j).
$$
This last product is the inverse of the generating series of partitions of integers, which is known to be convergent for $|a|<1$. This implies that the above probability is non-zero (i.e., there is a non-zero probability that a disk of order one is empty of eigenvalues), and that this probability tends to $1$ when $a \rightarrow 0$. 

Let $E_{2,b}$ be the event that more than one eigenvalue is in $D(0,b)$. Similar to the above computations, we deduce the lower bound
$$
\PP \left( E_{2,b} \right)
> \PP \left( \beta_{1,1} < b^2, \beta_{2,1} < b^2 \right)
= b^6 \xrightarrow[b \rightarrow 1^-]{} 1.$$
The claim now follows by contradiction. Assume there is a strongly separated outlier for all times $t_0 > \mu N^{-1/2}$. Denote by $p_{\mu}>0$ the probability of the random time $t$ being in this regime, recalling that $t^2$ follows a $\beta_{1,N}$ distribution. In the given setting, we can choose a radius $a>0$ such that the disk $D(0,a)$ is free of eigenvalue with probability larger than $1-\frac{p_\mu}{2}$. These two observations result in a positive probability of both events (empty disk and  $t > \mu N^{-\frac12}$) occurring simultaneously. Hence, the strong separation must hold with $\alpha_1\geq0$.

Similarly, we can choose a radius $b>0$ such that there is a probability larger than $1-\frac{p_\mu}{2}$ of having two eigenvalues in $D(0,b)$. By a similar argument, the strong separation must hold with $\alpha_2 \leq 0$. This is a contradiction, as the definition of strong separation requires that $\alpha_1 < \alpha_2$.
\end{proof}

\section*{Appendix: on the regularity assumption}

We would like to close this work with a few remarks about the regularity assumption for $\W_2$ (Assumption \ref{smart_ass}) and how it differs from the isotropic local law used in \cite{DubachErdos}. \\

First, note that this assumption holds for models beyond the CUE case and does not rely on level repulsion. Indeed, the argument provided for CUE above applies, for instance, to any unitary invariant matrix (i.e. $\smash{U \disteq QUQ^*}$ for any unitary $Q$) with eigenvalues distributed according to a generic $\alpha$-determinantal processes with $\alpha \in [-1,1]$ (see \cites{alphadet1,alphadet2} for a definition and history of these processes). In particular, we obtain an analog of Lemma~\ref{lemma:CUE_bound_1} with an $\alpha$-determinant, provided only that $|\alpha|\leq 1$. This covers the special case of i.i.d. points ($\alpha = 0$) and that of a permanental point process ($\alpha=1$), for which repulsion is replaced by some amount of attraction between points. A natural question, then, would be to find optimal conditions under which the regularity assumption holds. \\

Secondly, we contrast Assumption \ref{smart_ass} with the stochastic domination bound for CUE:
\begin{equation}\label{noniso_law}
\tr \frac{(zU^*)^2}{1-zU^*}
\prec 
\frac{|z|^2}{N (1-|z|)^2}
\end{equation}
which holds uniformly in $\D_\delta$. The proof of~\eqref{noniso_law} is straightforward from Johansson's lemma, which we briefly recall for the convenience of the reader.
\begin{lemma}[Lemma~2.9 in~\cite{Johansson}]\label{lem-Johansson}
Let $U$ be a CUE matrix and $f$ be a real function defined on the unit disk. Further, define
\begin{displaymath}
\|f \|_H^2 = \sum_{k \in \mathbb{Z}} |k| |\hat{f}_k|^2,\qquad \hat{f}_k = \frac{1}{2\pi} \int_0^{2\pi} f(\re^{\ri \theta})  \re^{-\ri k\theta} \dd \theta.
\end{displaymath}
If $\| f \|_H<\infty$, then
\begin{displaymath}
\E\left( \re^{\tr f(U)}\right) \leq \re^{N \hat{f}_0 + \frac12 \| f \|_H^2}.
\end{displaymath}
\end{lemma}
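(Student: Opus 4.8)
The plan is to obtain this finite-$N$, one-sided form of the strong Szegő theorem from the Cauchy identity together with the orthogonality of Schur polynomials on the unitary group. First I would reduce to the normalized case $\hat f_0 = 0$: replacing $f$ by $f - \hat f_0$ divides the left-hand side by $\re^{N\hat f_0}$ and does not change $\|f\|_H$, so it suffices to prove $\E(\re^{\tr f(U)}) \le \exp(\tfrac12\|f\|_H^2)$ when $\hat f_0 = 0$. Then, approximating $f$ by the trigonometric polynomials $f^{(m)} = \sum_{1 \le |k| \le m} \hat f_k \re^{\ri k\theta}$, for which $\|f^{(m)}\|_H \nearrow \|f\|_H$ and for which every series below is absolutely convergent, I would further assume $f$ to be a trigonometric polynomial and recover the general statement by letting $m \to \infty$.

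Since $f$ is real with $\hat f_0 = 0$, one has $f = f_+ + \overline{f_+}$ on $\partial\D$ with $f_+(z) = \sum_{k \ge 1} \hat f_k z^k$ holomorphic in $\D$; set $g_+ := \re^{f_+}$, which is zero-free with $g_+(0) = 1$. Then $\tr f(U) = 2\Re\,\tr f_+(U)$, whence
\begin{displaymath}
\re^{\tr f(U)} = \bigl|\re^{\tr f_+(U)}\bigr|^2 = \Bigl|\,\prod_{j=1}^N g_+(\la_j)\,\Bigr|^2, \qquad \text{where } \la_1, \dots, \la_N \text{ are the eigenvalues of } U.
\end{displaymath}
Encoding the Taylor coefficients of $g_+$ as the complete homogeneous symmetric functions of a formal alphabet $y$ whose power sums are $p_m(y) = m\hat f_m$ (consistent, since $\log g_+(z) = f_+(z) = \sum_{m \ge 1} \tfrac{p_m(y)}{m} z^m$), the Cauchy identity gives $\prod_{j=1}^N g_+(\la_j) = \sum_\mu s_\mu(y)\, s_\mu(U)$, where $s_\mu(U) := s_\mu(\la_1, \dots, \la_N)$ and the sum runs over all partitions $\mu$.

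Taking expectations and using the Schur orthogonality relations $\E_{\CUE(N)}\!\bigl(s_\mu(U)\overline{s_\nu(U)}\bigr) = \delta_{\mu\nu}\,\mathbf 1[\ell(\mu) \le N]$ (a consequence of $s_\mu$ in $N$ variables vanishing once $\ell(\mu) > N$ and being an irreducible character of $\mathbb{U}_N$ otherwise), I obtain the exact identity $\E(\re^{\tr f(U)}) = \sum_{\mu : \ell(\mu) \le N} |s_\mu(y)|^2$. Dropping the constraint $\ell(\mu) \le N$ — all summands being nonnegative — and applying the Cauchy identity a second time,
\begin{displaymath}
\E\bigl(\re^{\tr f(U)}\bigr) \le \sum_{\text{all } \mu} |s_\mu(y)|^2 = \exp\!\Bigl(\sum_{m \ge 1} \frac{|p_m(y)|^2}{m}\Bigr) = \exp\!\Bigl(\sum_{m \ge 1} m|\hat f_m|^2\Bigr) = \exp\!\bigl(\tfrac12\|f\|_H^2\bigr),
\end{displaymath}
the last equality because $|\hat f_{-m}| = |\hat f_m|$. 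Undoing the normalization $\hat f_0 = 0$ restores the factor $\re^{N\hat f_0}$ and yields the asserted bound.

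The step I expect to need the most care is the handling of the formal alphabet $y$, which is defined only through its power sums: one must either carry out the Cauchy expansions inside the ring of symmetric functions and then check that specializing the variables to the unimodular eigenvalues $\la_j$ preserves convergence, or — as arranged above — first establish the inequality for trigonometric polynomials $f$, where $g_+ = \re^{f_+}$ is entire, its Taylor coefficients decay super-exponentially, the quantities $s_\mu(y)$ decay rapidly in $|\mu|$, and all sums converge absolutely, before passing to the limit $m \to \infty$ by monotone convergence of $\|f^{(m)}\|_H^2$. The remaining ingredients — the splitting $\tr f(U) = 2\Re\,\tr f_+(U)$, the Cauchy identity, and the $\CUE$ Schur orthogonality relations — are classical. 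Note that the only inequality used is the removal of the truncation $\ell(\mu) \le N$, so that equality holds in the limit $N \to \infty$, recovering the strong Szegő theorem.
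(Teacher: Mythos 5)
Your argument is correct, and it is essentially the classical proof of this one-sided strong Szegő bound (Gessel/Cauchy identity expansion of $\prod_j \re^{f(\lambda_j)}$ in Schur functions, $\CUE$ Schur orthogonality, and dropping the constraint $\ell(\mu)\leq N$ by positivity), which is the route taken in the cited source; the paper itself imports the lemma from Johansson without proof. The two points you flag — absolute convergence justifying the interchange of $\E$ with the sum over partitions, and the passage $m\to\infty$ (where a Fatou argument on $\E(\re^{\tr f^{(m)}(U)})$ closes the limit) — are indeed the only places requiring care, and your reduction to trigonometric polynomials handles them.
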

Note that~\eqref{noniso_law} may be interpreted (up to normalization) as an averaged local law for the same object as Assumption \ref{smart_ass}. The bound \eqref{noniso_law} is more precise and relies on level repulsion (but not on any assumption concerning the eigenvectors, which do not play any role here). It also follows from Lemma \ref{lem-Johansson} that, in the CUE case, the bound in the regularity assumption can be strengthened to $|z|^2 (N(1-|z|))^{-1/2}$, an improvement that relies on level repulsion and so holds with less generality. Moreover, this does not affect the estimates related to the emergence of an outlier, only the bound on all eigenvalues close to the unit circle in the supercritical regimes.

\begin{bibdiv}
\begin{biblist}


\bib{BBP}{article}{
  title={Phase transition of the largest eigenvalue for nonnull complex sample covariance matrices},
  author={Baik, J.},
  author={Ben Arous, G.},
  author={P{\'e}ch{\'e}, S.},
  journal={Ann. Probab.},
  volume={33},
  number={5},
  pages={1643--1697},
  year={2005},
  publisher={Institute of Mathematical Statistics}
}

\bib{BHNY}{article}{
  title={The characteristic polynomial of a random unitary matrix: A probabilistic approach},
  author={Bourgade, P.},
  author={Hughes, C. P.},
  author={Nikeghbali, A.},
  author={Yor, M.},
  journal={Duke Math. J.},
  volume = {145},
  number = {1},
  pages = {45--69},
  year = {2008},
  publisher = {Duke University Press}
}

\bib{CollinsMatsumoto}{article}{
  title={Weingarten calculus via orthogonality relations: new applications},
  author={Collins, B.},
  author={Matsumoto, S.},
  journal={ALEA}, 
  volume={14},
  number={1},
  pages={631--656},
  year={2017}
}

\bib{alphadet1}{article}{
  title={Free fermions and $\alpha$-determinantal processes},
  author={Cunden, F. D.},
  author={Majumdar, S. N.},
  author={O’Connell, N.},
  journal={Journal of Physics A: Mathematical and Theoretical},
  volume={52},
  number={16},
  pages={165202},
  year={2019},
  publisher={IOP Publishing}
}

\bib{Dubach_TUE}{article}{
  title={Explicit formulas concerning eigenvectors of weakly non-unitary matrices},
  author={Dubach, G.},
  note={To appear in Electron. Commun. Probab.},
  eprint={arXiv:2111.12517},
  year={2021}
}

\bib{Dubach_Powers}{article}{
  title={Powers of Ginibre eigenvalues},
  author={Dubach, G.},
  journal={Electron. J. Probab.},
  volume={23},
  pages={1--31},
  year={2018}
  }

\bib{DubachErdos}{article}{
  title={Dynamics of a rank-one perturbation of a Hermitian matrix},
  author={Dubach, G.},
  author={Erd{\H{o}}s, L.},
  eprint={arXiv:2108.13694},
  year={2021}
}

\bib{Forrester_Review}{article}{
  title={Rank 1 perturbations in random matrix theory - a review of exact results},
  author={Forrester, P. J.},
  eprint={arXiv:2201.00324},
  year={2022}
}

\bib{ForresterIpsen}{article}{
  title={A generalization of the relation between zeros of the complex Kac polynomial and eigenvalues of truncated unitary matrices},
  author={Forrester, P. J.},
  author={Ipsen, J. R.},
  journal={Prob. Theory Relat. Fields},
  volume={31},
  pages={833--847},
  year={2019}
}

\bib{Fyodorov2001}{article}{           
  title={Spectra of random matrices close to unitary and scattering theory for discrete-time systems},
  author={Fyodorov, Y. V.},
  book={
  title={in: Disordered and complex systems},
  series={AIP Conference Proceedings},
  volume={553}
  publisher={Amer. Inst. Phys.},
  address={Melville, NY}
  pages={191--196},
  year={2001}
  }
}

\bib{FyodorovKhoruzhenko}{article}{
  title={Systematic analytical approach to correlation functions of resonances in quantum chaotic scattering},
  author={Fyodorov, Y. V.},
  author={Khoruzhenko, B. A.},
  journal={Phys. Rev. Lett.},
  volume={83},
  pages={65--68},
  year={1999}
}

\bib{Fyodorov_CUE}{article}{
  title={Extreme eigenvalues of random sub-unitary matrices: from Fréchet to Gumbel},
  author={Fyodorov, Y. V.},
  author={Khoruzhenko, B. A.},
  journal={in prep.},
  volume={},
  pages={},
  year={}
}

\bib{FyodorovGUE}{article}{
  title={Extreme Eigenvalues and the Emerging Outlier in Rank-One Non-Hermitian Deformations of the Gaussian Unitary Ensemble},
  author={Fyodorov, Y. V.},
  author={Khoruzhenko, B. A.},
  author={Poplavskyi, M.},
  journal={Entropy},
  volume={25}, 
  number={74},
  year={2023}
}

\bib{FyodorovMehlig}{article}{
  title={Statistics of resonances and nonorthogonal eigenfunctions in a model for single-channel chaotic scattering},
  author={Fyodorov, Y. V.},
  author={Mehlig, B.},
  journal={Phys. Rev. E},
  volume={66},
  number={4},
  pages={045202},
  year={2002}
}

\bib{FyodorovSavin}{article}{
  title={Resonance Scattering of Waves in Chaotic Systems},
  author={Fyodorov, Y. V.},
  author={Savin, D. V.},
  book={
  title={in: The Oxford Handbook of Random Matrix Theory},
  editor={Akemann, G.},
  editor={Baik, J.},
  editor={Di Francesco, P.},
  pages={703--722},
  series={Oxford Handbooks in Mathematics},
  publisher={Oxford University Press},
  address={Oxford},
  year={2011}
  }
}

\bib{FyodorovSommers1996}{article}{
  title={Statistics of S-matrix poles in few-channel chaotic scattering: Crossover from isolated to overlapping resonances},
  author={Fyodorov, Y.V.},
  author={Sommers, H.J.},
  journal={JETP Lett.},
  volume={63},
  number={12},
  pages={1026--1030},
  year={1996},
  publisher={Springer}
}

\bib{FyodorovSommers1997}{article}{
  title={Statistics of resonance poles, phase shifts and time delays in quantum chaotic scattering: Random matrix approach for systems with broken time-reversal invariance},
  author={Fyodorov, Y.V.},
  author={Sommers, H.J.},
  journal={J. Math. Phys.},
  volume={38},
  number={4},
  pages={1918--1981},
  year={1997},
  publisher={American Institute of Physics}
}

\bib{FyodorovSommers2003}{article}{
  title={Random matrices close to Hermitian or unitary: overview of methods and results},
  author={Fyodorov, Y. V.},
  author={Sommers, H. J.},
  journal={J. Phys. A Math.},
  volume={36},
  number={12},
  pages={3303},
  year={2003},
  publisher={IOP Publishing}
}

\bib{HKPV}{article}{
  title={Determinantal Processes and Independence}
  author={Hough, J. B.},
  author={Krishnapur, M.},
  author={Peres, Y.},
  author={Vir{\'a}g, B.},
  journal={Probab. Surv.},
  volume={3},
  pages={206--229},
  year={2006},
  publisher = {Institute of Mathematical Statistics and Bernoulli Society}
}

\bib{Johansson}{article}{
  title={On random matrices from the compact classical groups},
  author={Johansson, K.},
  journal={Ann. Math.},
  pages={519--545},
  year={1997},
  publisher={JSTOR}
 }

\bib{Kostlan}{article}{
  title={On the spectra of Gaussian matrices},
  author={Kostlan, E.}
  journal={Linear Algebra Appl.},
  volume={162--164},
  pages={385--388},
  year={1992}
  }

\bib{Krishnapur}{article}{
  title={From random matrices to random analytic functions},
  author={Krishnapur, M.},
  journal={Ann. Probab.},
  volume={37},
  number={1},
  pages={314--346},
  year={2009}
}

\bib{alphadet2}{article}{
  title={Necessary and sufficient conditions for the existence of $\alpha$-determinantal processes},
  author={Maunoury, F.},
  booktitle={S{\'e}minaire de Probabilit{\'e}s XLVIII},
  pages={423--444},
  year={2016},
  publisher={Springer}
}

\bib{Meckes_book}{book}{
  title={The random matrix theory of the classical compact groups},
  author={Meckes, E. S.},
  series={Cambridge Tracts in Mathematics},
  volume={218},
  year={2019},
  publisher={Cambridge University Press},
  address={Cambridge}
}

\bib{Tao2013}{article}{
  title={Outliers in the spectrum of iid matrices with bounded rank perturbations},
  author={Tao, T.},
  journal={Probab. Theory Relat. Fields},
  volume={155},
  pages={231-–263},
  year={2013}
}

\end{biblist}
\end{bibdiv}

\end{document}